\documentclass[11pt,reqno]{amsart}

\usepackage[letterpaper,left=0.80in,right=0.80in,top=0.95in,bottom=0.95in]{geometry}
\usepackage[T1]{fontenc}
\usepackage[utf8]{inputenc}
\usepackage{lmodern}
\usepackage{microtype}
\usepackage{amssymb,mathtools}
\usepackage{booktabs,array}
\usepackage{enumitem}
\usepackage{xcolor}
\usepackage{hyperref}

\hypersetup{
  colorlinks=true,
  linkcolor=blue!55!black,
  citecolor=blue!55!black,
  urlcolor=blue!55!black,
  pdftitle={Positive rational series for reciprocal powers of Catalan's constant and Dirichlet beta values},
  pdfauthor={Narendra Bhandari}
}

\allowdisplaybreaks[2]
\numberwithin{equation}{section}
\newtheorem{theorem}{Theorem}[section]
\newtheorem{proposition}[theorem]{Proposition}
\newtheorem{lemma}[theorem]{Lemma}
\newtheorem{corollary}[theorem]{Corollary}
\newtheorem{conjecture}[theorem]{Conjecture}
\theoremstyle{definition}
\newtheorem{definition}[theorem]{Definition}
\theoremstyle{remark}
\newtheorem{remark}[theorem]{Remark}

\DeclareMathOperator{\Ti}{Ti}
\DeclareMathOperator{\RePart}{Re}
\newcommand{\dd}{\,\mathrm{d}}
\newcommand{\e}{\mathrm{e}}
\newcommand{\Gconst}{G}
\newcommand{\poch}[2]{\left(#1\right)_{#2}}

\title[Reciprocal powers of Catalan's constant]
{Positive rational series for reciprocal powers of Catalan's constant
and Dirichlet beta values}
\author{Narendra Bhandari}
\address{Department of Mathematics, University of North Texas, Denton, Texas 76201, USA}
\email{narendra.bhandari@unt.edu}
\email{narenbhandari04@gmail.com}
\date{July 2026}

\subjclass[2020]{Primary 11Y60, 40G05;
Secondary 11M06, 33B30}

\keywords{Catalan's constant, Dirichlet beta function,
reciprocal power series, Euler transformation, series acceleration,
positive rational series, inverse tangent integral}

\begin{document}

\begin{abstract}
Let
\(
\beta(s)=\sum_{k=0}^{\infty}{(-1)^k}/{(2k+1)^s}\) and \(
\Gconst=\beta(2),
\)
where \(\Gconst\) is Catalan's constant.  We develop two families of
positive series for reciprocal powers \(\beta(s)^{-r}\), where
\(r\geq1\) is an integer.  The first
comes from the classical Euler transformation and is evaluated at
\(1/2\); it is valid for real \(s>0\).  A second transformation, valid
for \(s\geq2\), produces a faster series evaluated at \(1/3\).  We
obtain finite-sum and integral formulas for the base coefficients,
together with a positive recurrence and a composition formula for
all positive integral reciprocal powers.  When \(s\) is an integer, all
coefficients are rational.  We also give explicit remainder estimates
and determine the exact root-convergence rates of the two families.
As applications, we obtain positive rational series for every
reciprocal power of Catalan's constant and for reciprocal powers of
\(\pi\) arising from odd values of the Dirichlet beta function.
\end{abstract}
\maketitle
\vspace{-0.9\baselineskip}
\section{Introduction}
\label{sec:introduction}

\subsection{Background and purpose}

The Dirichlet beta function is defined by
\begin{equation}\label{eq:G-definition}
 \beta(s)
 =
\sum_{n=0}^{\infty}\frac{(-1)^n}{(2n+1)^s},
 \qquad s>0,
 \qquad
 \Gconst=\beta(2).
\end{equation}
The value \(\Gconst\) is Catalan's constant.  It has a simple
alternating-series definition, but its arithmetic nature is still not
understood: in particular, it is not known whether \(\Gconst\) is
irrational.

Catalan studied transformations of related series and definite
integrals \cite{Catalan1867}.  The inverse tangent integral is discussed
in \cite{Ramanujan1915,Lewin1981,Campbell2022}.  Accelerated series and
rational approximations for Catalan's constant are studied in
\cite{Bradley1999}, while continued-fraction representations appear in
\cite{BowmanMcLaughlin2002,Zudilin2003}.  Important arithmetic results for Catalan's constant and related beta values were obtained by Rivoal and
Zudilin \cite{RivoalZudilin2003}, Rivoal \cite{Rivoal2006},
Nesterenko \cite{Nesterenko2016}, and Krattenthaler and Zudilin
\cite{KrattenthalerZudilin2019}.  Zudilin
\cite[Thm.~1]{Zudilin2019} proved that at least one of
\(\beta(2),\beta(4),\ldots,\beta(12)\) is irrational.  These results
do not, however, settle the irrationality of \(\Gconst\), and that is
not the aim of this paper.

Here we study reciprocal powers
\[
 \beta(s)^{-r},\qquad r\geq1,
\]
where \(r\) is an integer.
The Euler product for \(L(s,\chi_4)=\beta(s)\) already gives a
Dirichlet-series expansion for these quantities.  We take a different
approach.  Our goal is to obtain ordinary power series whose terms are
positive and, when \(s\) is an integer, rational.  The coefficients are
given by finite formulas and can be generated efficiently by a simple
recurrence.
The first construction is based on Euler's transformation of an
alternating series; a classical treatment is given by Hardy
\cite[Ch.~VIII]{Hardy1949}.  Related acceleration methods were
studied by Wynn \cite{Wynn1971} and Cohen et al.~\cite{CohenRVZ2000}.
The reciprocal of a power series has its own long history, including
work of Kaluza \cite{Kaluza1928}, Lamperti \cite{Lamperti1958}, and
Baricz et al.~\cite{BariczVestiVuorinen2011}.

\subsection{Main results}

The idea is straightforward.  We begin with a generating function for
the beta series and apply two fractional-linear substitutions.  The
classical Euler substitution \(t=w/(1-w)\) leads to a positive series
evaluated at \(w=1/2\) for every real \(s>0\).  A second substitution,
\begin{equation}\label{eq:transform-intro}
 t=\frac{2u}{1-u},
\end{equation}
leads to a positive series evaluated at \(u=1/3\) when \(s\geq2\).
The resulting representations are
\[
 \frac1{\beta(s)^r}
 =
 \sum_{n=0}^{\infty}\frac{b_{s,r,n}}{2^n},
 \qquad
 \frac1{\beta(s)^r}
 =
 \sum_{n=0}^{\infty}\frac{c_{s,r,n}}{3^n},
\]
with positive coefficients.  Their exact root-convergence rates are
\(1/2\) and \(1/3\), respectively.  We also give explicit remainder
bounds, finite-sum and integral formulas for the base coefficients,
and a recurrence and composition formula for their reciprocal powers.
Two applications are especially simple to state.  For every integer
\(r\geq1\), Corollary~\ref{cor:all-G} gives
\begin{equation*}
\begin{aligned}
 \frac1{\Gconst^r}
 &=1+
 \sum_{n=1}^{\infty}\frac1{3^n}
 \sum_{j=1}^{n}\binom{r+j-1}{j}
\sum_{\substack{n_1+\cdots+n_j=n\\n_i\geq1}}
 \prod_{\ell=1}^{j}
 \left[
 \sum_{k=1}^{n_\ell}
 \frac{(-1)^{k+1}2^k}{(2k+1)^2}
 \binom{n_\ell-1}{k-1}
 \right],
\end{aligned}
\end{equation*}
and Corollary~\ref{cor:pi1-ordinary} gives
\begin{equation*}
\begin{aligned}
 \frac{4^r}{\pi^r}
 &=
 1+\sum_{n=1}^{\infty}\frac1{2^n}
 \sum_{j=1}^{n}\binom{r+j-1}{j}
\sum_{\substack{n_1+\cdots+n_j=n\\n_i\geq1}}
 \prod_{\ell=1}^{j}
 \left[
 \sum_{k=1}^{n_\ell}
 \frac{(-1)^{k+1}}{2k+1}
 \binom{n_\ell-1}{k-1}
 \right].
\end{aligned}
\end{equation*}
Every term after the initial \(1\) is a positive rational number.
The first formula is proved in Section~\ref{sec:even-beta}, and the
second in Section~\ref{sec:odd-beta}.  The method is Euler-type rather
than modular or hypergeometric in the sense of Ramanujan-type series.

\section{Preliminary representations of beta values}
\label{sec:standard-dirichlet}

\subsection{A classical Dirichlet series}

Before beginning the positive power-series construction, we recall a
standard Dirichlet-series identity.  Apostol gives the case \(r=1\)
explicitly \cite[\S11.4, Example~3]{Apostol1976}, and the general
positive-integral case appears in Ng and Toma
\cite[Eq.~(5.3)]{NgToma2024}.  With \(\chi=\chi_4\) and \(s=2\), the
identity becomes a series for \(\Gconst^{-r}\).  This classical
representation will serve only as a point of comparison for the
positive series developed below.

Let \(\chi_4\) be the primitive character modulo \(4\):
\begin{equation}\label{eq:chi4}
 \chi_4(n)=
 \begin{cases}
   0,&2\mid n,\\
   1,&n\equiv1\pmod4,\\
  -1,&n\equiv3\pmod4.
 \end{cases}
\end{equation}
Then
\[
 \beta(s)=L(s,\chi_4)
 =\sum_{n=1}^{\infty}\frac{\chi_4(n)}{n^s}.
\]
Let \(\mu\) denote the M\"obius function.  For arithmetic functions
\(f\) and \(g\), their Dirichlet convolution is defined by
\[
 (f*g)(n)=\sum_{d\mid n}f(d)g(n/d).
\]
Define the \(r\)-fold Dirichlet convolution
\begin{equation}\label{eq:mu-r}
 \mu_r=\underbrace{\mu*\mu*\cdots *\mu}_{r\ {\rm factors}}.
\end{equation}

\begin{theorem}
\label{thm:standard-L}
For every integer \(r\geq1\) and every real \(s>1\),
\begin{equation}\label{eq:standard-L-series}
 \frac1{\beta(s)^r}
 =\sum_{n=1}^{\infty}\frac{\chi_4(n)\mu_r(n)}{n^s}.
\end{equation}
The series in \eqref{eq:standard-L-series} converges absolutely.
Moreover, \(\mu_r\) is multiplicative and
\begin{equation}\label{eq:mu-r-prime}
 \mu_r(p^a)=
 \begin{cases}
  (-1)^a\binom{r}{a},&0\leq a\leq r,\\
  0,&a>r.
 \end{cases}
\end{equation}
\end{theorem}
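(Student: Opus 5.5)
We first compute \(\mu_r\) on prime powers and establish its multiplicativity, then deduce absolute convergence from a polynomial bound on \(|\mu_r|\), and finally identify the Dirichlet series with \(\beta(s)^{-r}\) through the Euler product.

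\emph{Multiplicativity and prime powers.} Since \(\mu\) is multiplicative and Dirichlet convolution preserves multiplicativity, \(\mu_r\) is multiplicative by induction on \(r\). To evaluate \(\mu_r(p^a)\), we use the local generating function. For a multiplicative \(f\), set \(F_p(x)=\sum_{a\ge0}f(p^a)x^a\). Convolution of multiplicative functions corresponds to multiplication of these local series, because \((f*g)(p^a)=\sum_{i+j=a}f(p^i)g(p^j)\). For \(\mu\) the local series is \(1-x\), so the local series of \(\mu_r\) is \((1-x)^r\). Comparing coefficients gives \(\mu_r(p^a)=(-1)^a\binom{r}{a}\) for \(a\le r\) and \(0\) otherwise, which is \eqref{eq:mu-r-prime}.

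\emph{Absolute convergence.} From \eqref{eq:mu-r-prime}, \(|\mu_r(p^a)|\le\binom{r}{a}\le r^a\). Hence \(|\mu_r(n)|\le d_r(n)\), where \(d_r\) is the \(r\)-fold divisor function, and \(d_r(n)=O_\varepsilon(n^\varepsilon)\). Since \(|\chi_4(n)|\le1\), the series \(\sum|\chi_4(n)\mu_r(n)|n^{-s}\) converges for \(s>1\). A simpler route is to note that \(\sum_n |\mu_r(n)|n^{-s}\le \zeta(s)^r<\infty\). This holds because \(|\mu_r|\le 1*1*\cdots*1\) pointwise, which follows from \(|f*g|\le|f|*|g|\) and \(|\mu|\le 1\).

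\emph{Identification with \(\beta(s)^{-r}\).} The function \(n\mapsto\chi_4(n)\mu_r(n)\) is multiplicative, since \(\chi_4\) is completely multiplicative. By absolute convergence, its Dirichlet series has the Euler product
\[
\prod_p\sum_{a=0}^{r}\chi_4(p)^a(-1)^a\binom{r}{a}p^{-as}=\prod_p\bigl(1-\chi_4(p)p^{-s}\bigr)^r .
\]
On the other hand, \(\beta(s)=\prod_p(1-\chi_4(p)p^{-s})^{-1}\) converges absolutely for \(s>1\). This product is nonzero, since the partial products converge to a nonzero limit: \(\sum_p p^{-s}<\infty\) and no factor vanishes. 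Therefore \(\beta(s)^{-r}=\prod_p(1-\chi_4(p)p^{-s})^{r}\), which gives \eqref{eq:standard-L-series}.

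An equivalent route avoids Euler products. Absolutely convergent Dirichlet series multiply by convolution, and \((\chi_4\mu)*\chi_4=\chi_4(\mu*1)=\varepsilon\), so \(\sum\chi_4\mu\, n^{-s}=1/\beta(s)\). Raising to the \(r\)-th power and using \(\chi_4(\mu_r)=(\chi_4\mu)^{*r}\), which holds by complete multiplicativity of \(\chi_4\), gives the result. We would present this version, since it is cleanest.

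The main point needing care is this last justification: that the product of absolutely convergent Dirichlet series is the series of the convolution, and that \(\chi_4(f*g)=(\chi_4f)*(\chi_4g)\). Both are standard, and we would cite Apostol for them. Everything else is routine.
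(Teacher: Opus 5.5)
Your proof is correct, but the version you say you would present takes a different route from the paper. The paper works entirely with Euler products. It inverts the absolutely convergent product $\beta(s)=\prod_p(1-\chi_4(p)p^{-s})^{-1}$, raises it to the $r$th power, and expands each factor $(1-\chi_4(p)p^{-s})^r$ binomially. It then reads off the coefficient of $n^{-s}$ as $\chi_4(n)\mu_r(n)$ from the local generating function $(1-z)^r$ and multiplicativity. Finally, it justifies multiplying out the product via $|\mu_r(n)|\le\sum_{n_1\cdots n_r=n}1$, i.e.\ domination by $\zeta(s)^r$.

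Your Euler-product argument is essentially this proof run in reverse, and your convergence bound is the paper's. Your preferred argument never touches infinite products: $\chi_4\mu$ is the Dirichlet inverse of $\chi_4$, so the multiplication theorem for absolutely convergent Dirichlet series gives $\sum_n\chi_4(n)\mu(n)n^{-s}=1/\beta(s)$. Complete multiplicativity of $\chi_4$ then converts the $r$th power into $\sum_n\chi_4(n)\mu_r(n)n^{-s}$.

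What each route buys: yours gets the nonvanishing of $\beta(s)$ for free and needs the prime-power formula only for the final clause of the theorem. The paper's route makes visible where the coefficients $(-1)^a\binom{r}{a}$ come from, namely the Euler factor $(1-\chi_4(p)p^{-s})^r$. So there the identity and the formula for $\mu_r(p^a)$ emerge together.

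One small slip: $\binom{r}{a}\le r^a$ does not imply $|\mu_r(p^a)|\le d_r(p^a)$. Indeed $d_r(p^a)=\binom{a+r-1}{a}$ can be smaller than $r^a$; for $r=3$, $a=2$ it is $6<9$. The right comparison is $\binom{r}{a}\le\binom{a+r-1}{a}$. In any case, your pointwise bound $|\mu_r|\le 1*\cdots*1$ already settles absolute convergence.
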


\medskip
\noindent A proof is given in Appendix~\ref{app:standard-L-proof}.

\begin{corollary}
For every integer \(r\geq1\),
\begin{equation}\label{eq:standard-G}
 \frac1{\Gconst^r}
 =\sum_{n=1}^{\infty}
   \frac{\chi_4(n)\mu_r(n)}{n^2}.
\end{equation}
\end{corollary}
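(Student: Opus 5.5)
This is the case \(s=2\) of Theorem~\ref{thm:standard-L}. Since \(2>1\), equation \eqref{eq:standard-L-series} applies with \(s=2\), and \(\beta(2)=\Gconst\) by \eqref{eq:G-definition}. Substituting gives \eqref{eq:standard-G} immediately.

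For completeness, I recall how the cited theorem itself would be established. The only nontrivial input concerns \(\mu_r\).

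First, \(\chi_4\) is completely multiplicative and \(\mu\) is multiplicative, so \(\mu_r\) is multiplicative as a convolution of multiplicative functions. Its prime-power values follow from the formal Euler factor
\[
\sum_{a\ge0}\mu_r(p^a)x^a=(1-x)^r,
\]
which yields \eqref{eq:mu-r-prime}. In particular \(|\mu_r(n)|\le \tau_r(n)\), the \(r\)-fold divisor function. Hence \(\sum|\chi_4(n)\mu_r(n)|n^{-s}\le\zeta(s)^r<\infty\) for \(s>1\), which gives absolute convergence.

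Second, for \(s>1\) the product of absolutely convergent Dirichlet series is the Dirichlet series of the convolution. Because \(\chi_4\) is completely multiplicative, \((\chi_4 f)*(\chi_4 g)=\chi_4(f*g)\). Therefore
\[
\beta(s)\sum_{n\ge1}\frac{\chi_4(n)\mu(n)}{n^s}=\sum_{n\ge1}\frac{\chi_4(n)(1*\mu)(n)}{n^s}=1,
\]
so \(\sum\chi_4\mu\, n^{-s}=\beta(s)^{-1}\), and in particular \(\beta(s)\neq0\). Raising to the \(r\)-th power, again by convolution, gives \(\sum\chi_4\mu_r\, n^{-s}=\beta(s)^{-r}\).

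There is no real obstacle here. The only point to check is the justification for multiplying absolutely convergent series, which the bound \(\zeta(s)^r\) supplies at \(s=2\).
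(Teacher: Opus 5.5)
Your proof is correct and is the paper's: take \(s=2\) in Theorem~\ref{thm:standard-L} and use \(\beta(2)=\Gconst\). Your supplementary sketch proves the theorem itself through the convolution identity \(\beta(s)\sum_{n\geq1}\chi_4(n)\mu(n)n^{-s}=1\), whereas the paper's appendix expands the Euler product. That route is also sound: complete multiplicativity of \(\chi_4\) gives \((\chi_4f)*(\chi_4g)=\chi_4(f*g)\), and the bound \(\zeta(s)^r\) justifies multiplying the absolutely convergent series.
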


\begin{proof}
The result follows immediately by taking \(s=2\) in
\eqref{eq:standard-L-series} and using
\(\beta(2)=\Gconst\).
\end{proof}

For \(r=1\) and \(r=2\), this gives the following explicit
Dirichlet series for reciprocal powers of Catalan's constant:
\begin{align}
 \sum_{n=1}^{\infty}
   \frac{\chi_4(n)\mu_1(n)}{n^2}
 &=1+\frac1{3^2}-\frac1{5^2}+\frac1{7^2}
   +\frac1{11^2}-\frac1{13^2}-\frac1{15^2}-\cdots
   =\frac1{\Gconst},
 \label{eq:standard-G1}\\
 \sum_{n=1}^{\infty}
   \frac{\chi_4(n)\mu_2(n)}{n^2}
 &=1+\frac2{3^2}-\frac2{5^2}+\frac2{7^2}
   +\frac1{9^2}+\frac2{11^2}-\frac2{13^2}
   -\frac4{15^2}-\cdots
   =\frac1{\Gconst^2}.
 \label{eq:standard-G2}
\end{align}
These formulas provide a useful comparison.  Their signs do not follow a
fixed alternating pattern, and their terms decay algebraically rather
than geometrically.  The construction below replaces them with
positive rational terms and geometric convergence.

\subsection{An integral representation}

Both transformations rest on a simple integral representation.  For
real \(s>0\), define
\begin{equation}\label{eq:F-definition}
 F_s(t)=\sum_{k=0}^{\infty}\frac{(-1)^kt^k}{(2k+1)^s},
 \qquad |t|<1.
\end{equation}

\begin{proposition}\label{prop:F-integral}
For each fixed real \(s>0\), the function \(F_s(t)\) extends
analytically to the half-plane
\(\RePart t>-1\). Then
\begin{equation}\label{eq:F-integral}
 F_s(t)
 =
 \frac1{\Gamma(s)}
 \int_0^1
 \frac{(-\log x)^{s-1}}{1+tx^2}\dd x.
\end{equation}
In particular,
\(
 F_s(1)=\beta(s).
\)
\end{proposition}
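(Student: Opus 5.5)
We prove \eqref{eq:F-integral} first for \(|t|<1\) and then use the right-hand side to extend \(F_s\) analytically. The starting point is the Mellin-type identity
\[
 \int_0^1 x^{2k}(-\log x)^{s-1}\dd x=\frac{\Gamma(s)}{(2k+1)^s},\qquad k\geq0,\ s>0.
\]
It follows from the substitution \(x=\e^{-u/(2k+1)}\), which converts the integral into \((2k+1)^{-s}\int_0^\infty u^{s-1}\e^{-u}\dd u\).

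For \(|t|<1\) we expand \(1/(1+tx^2)=\sum_{k\geq0}(-1)^kt^kx^{2k}\), valid for \(x\in[0,1]\). We then integrate term by term against \((-\log x)^{s-1}/\Gamma(s)\). The interchange is justified by dominated convergence, or by Tonelli applied to absolute values. Indeed,
\[
 \sum_k |t|^k x^{2k}(-\log x)^{s-1}\le \frac{(-\log x)^{s-1}}{1-|t|},
\]
and this bound is integrable on \((0,1)\) since \(s>0\). Termwise integration reproduces \eqref{eq:F-definition}, which establishes \eqref{eq:F-integral} on the unit disc.

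Next let \(I(t)\) denote the right-hand side of \eqref{eq:F-integral}. We show that \(I\) is holomorphic on \(\RePart t>-1\). The key bound is on the denominator: for \(x\in[0,1]\) and \(\RePart t>-1\),
\[
 |1+tx^2|\ge \RePart(1+tx^2)=1+x^2\RePart t\ge \min(1,1+\RePart t)>0.
\]
On a compact subset \(K\) of the half-plane this lower bound is uniform, say \(\ge\delta_K>0\). The integrand is then dominated by \(\delta_K^{-1}(-\log x)^{s-1}\), which is integrable. For each fixed \(x\) the integrand is holomorphic in \(t\). A standard holomorphy-under-the-integral argument then shows that \(I\) is holomorphic on \(\RePart t>-1\): one can use Morera together with Fubini, or differentiate under the integral using the same domination. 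Since \(I=F_s\) on the unit disc, \(I\) is the analytic continuation of \(F_s\), and uniqueness follows from the identity theorem because the half-plane is connected.

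Finally we identify \(F_s(1)\). The point \(t=1\) lies on the boundary of the disc, so the value \(F_s(1)\) is \emph{a priori} the continued value \(I(1)\). We must check that \(I(1)=\beta(s)\), including when \(0<s\le1\), where the series converges only conditionally. Abel's theorem applies here. The series \(\sum(-1)^k(2k+1)^{-s}\) converges, so \(\lim_{t\to1^-}F_s(t)=\beta(s)\). On the other hand, \(I\) is continuous at \(t=1\) by the domination above, so \(I(1)=\lim_{t\to1^-}I(t)=\beta(s)\).

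The main obstacle is the bookkeeping in this boundary step and in the continuation. The uniform lower bound on \(|1+tx^2|\) is what makes everything work. Abel's theorem is what handles the conditionally convergent range \(s\le1\), where a naive termwise interchange at \(t=1\) would fail.
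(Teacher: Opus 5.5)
Your proof is correct and follows essentially the same route as the paper. Both arguments use the moment identity, termwise integration on \(|t|<1\), the uniform lower bound \(\RePart(1+tx^2)\geq\delta_K\) on compact subsets to obtain holomorphy of the integral on \(\RePart t>-1\), and finally Abel's theorem combined with continuity of the integral at \(t=1\) to identify \(F_s(1)=\beta(s)\). Your explicit bound \(\min(1,1+\RePart t)\) and the domination by \((-\log x)^{s-1}/(1-|t|)\) only make the paper's steps slightly more explicit.
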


\begin{proof}
After the change of variable \(y=-\log x\), we note that
\begin{equation}\label{eq:moment}
 \frac1{(2k+1)^s}
 =
 \frac1{\Gamma(s)}
 \int_0^1x^{2k}(-\log x)^{s-1}\dd x.
\end{equation}
For \(|t|<1\), substitute \eqref{eq:moment} into
\eqref{eq:F-definition}.  Uniform convergence of the geometric series
on \(0\leq x\leq1\) allows integration term by term, and hence
\[
 \begin{aligned}
 F_s(t)
 &=
 \frac1{\Gamma(s)}
 \int_0^1
 (-\log x)^{s-1}
 \sum_{k=0}^{\infty}(-tx^2)^k\dd x=
 \frac1{\Gamma(s)}
 \int_0^1
 \frac{(-\log x)^{s-1}}{1+tx^2}\dd x.
 \end{aligned}
\]

The same integral extends the function to the larger half-plane.  Let
\(K\) be a compact subset of \(\RePart t>-1\).  Compactness gives a
constant \(\delta_K>0\) for which
\[
 \RePart(1+tx^2)
 =1+x^2\RePart t
 \geq\delta_K
 \qquad
 (t\in K,\ 0\leq x\leq1).
\]
Consequently,
\(
 |1+tx^2|\geq\delta_K
\)
and hence
\[
 \left|
 \frac{(-\log x)^{s-1}}{1+tx^2}
 \right|
 \leq
 \frac{(-\log x)^{s-1}}{\delta_K}.
\]
The function on the right is integrable because
\begin{align}\label{eq-gamma}
    \int_0^1(-\log x)^{s-1}\dd x=\Gamma(s)<\infty.
\end{align}
Thus, on every compact subset of \(\RePart t>-1\), the integrand is
bounded by an integrable function.  Because it is analytic in \(t\) for
each \(0<x<1\), the integral defines an analytic function there.  On
\(|t|<1\) it agrees with the original power series, so it is the
required analytic continuation.

It remains to identify the value at \(t=1\).  For \(0<\rho<1\), the
power series and the integral agree at \(t=\rho\).  Since the
alternating series for \(\beta(s)\) converges for every \(s>0\),
Abel's theorem gives
\[
 \lim_{\rho\to1^-}
 \sum_{k=0}^{\infty}
 \frac{(-1)^k\rho^k}{(2k+1)^s}
 =
 \beta(s).
\]
On the other hand, since
\[
 0<
 \frac{(-\log x)^{s-1}}{1+\rho x^2}
 \leq
 (-\log x)^{s-1},
\]
dominated convergence gives
\[
 \lim_{\rho\to1^-}F_s(\rho)
 =
 \frac1{\Gamma(s)}
 \int_0^1
 \frac{(-\log x)^{s-1}}{1+x^2}\dd x
 =
 F_s(1).
\]
The two limits agree, and hence \(F_s(1)=\beta(s)\).
\end{proof}
\begin{remark}\label{rem:positive-real-part}
The integral representation also shows directly that \(F_s(t)\) does not
vanish when \(\RePart t>-1\).  Indeed, both \(1+tx^2\) and its
reciprocal have positive real part for \(0\leq x\leq1\).  It follows that
\begin{equation}\label{eq:F-positive-real}
 \RePart F_s(t)>0\qquad(\RePart t>-1).
\end{equation}
This will later show that \(H_s(w)\) and \(A_s(u)\) do not vanish in
the unit disk; see Proposition~\ref{prop:zero-free}.
\end{remark}

\section{Two Euler transformations}
\label{sec:transforms}

\subsection{The ordinary Euler transformation}

For the first transformation, use the classical Euler substitution
\(
 t=\frac{w}{1-w}.
\)
Here \(t=1\) corresponds to \(w=1/2\).  Define
\begin{equation}\label{eq:H-definition}
 H_s(w)=F_s\!\left(\frac{w}{1-w}\right).
\end{equation}
\begin{theorem}\label{thm:H-coefficients}
For real \(s>0\) and \(|w|<1\),
\begin{equation}\label{eq:H-expansion}
 H_s(w)=1-\sum_{n=1}^{\infty}q_{s,n}w^n,
\end{equation}
where
\begin{align}
 q_{s,n}
 &=
 \sum_{k=1}^{n}
 \frac{(-1)^{k+1}}{(2k+1)^s}
 \binom{n-1}{k-1},                       \label{eq:q-finite}\\
 &=
 \frac1{\Gamma(s)}
 \int_0^1
 x^2(1-x^2)^{n-1}
 (-\log x)^{s-1}\dd x.                 \label{eq:q-integral}
\end{align}
In particular,
\(
 q_{s,n}>0
 \) for \(s>0,\ n\geq1.
\)
\end{theorem}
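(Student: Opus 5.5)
The plan is to work from the integral representation of Proposition~\ref{prop:F-integral}. That proposition makes \(H_s\) analytic on the unit disk, because \(w\mapsto w/(1-w)\) maps \(|w|<1\) onto \(\RePart t>-1/2\), which lies inside \(\RePart t>-1\). So \(H_s\) has a Taylor expansion on \(|w|<1\). The natural route is to compute its coefficients directly from the integral, which yields \eqref{eq:q-integral} first. The finite formula \eqref{eq:q-finite} then follows by expanding the binomial, and positivity is read off from the integral.

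\textbf{Step 1 (algebraic rewriting).} For \(t=w/(1-w)\) we have \(1+tx^2=(1-w+wx^2)/(1-w)=(1-w(1-x^2))/(1-w)\). Hence
\[
 \frac1{1+tx^2}=\frac{1-w}{1-w(1-x^2)}
 =1-\frac{wx^2}{1-w(1-x^2)}
 =1-\sum_{n=1}^{\infty}x^2(1-x^2)^{n-1}w^n .
\]
For \(|w|<1\) and \(0\le x\le1\) the geometric series is dominated by \(\sum |w|^n<\infty\), uniformly in \(x\).

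\textbf{Step 2 (integrate term by term).} Multiply by \((-\log x)^{s-1}/\Gamma(s)\) and integrate over \([0,1]\). The interchange of sum and integral is justified by the uniform bound \(|w|^n(-\log x)^{s-1}\), which is summable and integrable by \eqref{eq-gamma}. The constant term is \(\Gamma(s)^{-1}\int_0^1(-\log x)^{s-1}\dd x=1\), which gives \eqref{eq:H-expansion} with \(q_{s,n}\) as in \eqref{eq:q-integral}.

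\textbf{Step 3 (finite sum).} Expand \((1-x^2)^{n-1}=\sum_{j=0}^{n-1}\binom{n-1}{j}(-1)^jx^{2j}\) and apply the moment formula \eqref{eq:moment} with exponent \(2(j+1)\). This gives \(\sum_{j}\binom{n-1}{j}(-1)^j(2j+3)^{-s}\). Reindexing with \(k=j+1\) produces exactly \eqref{eq:q-finite}.

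\textbf{Step 4 (positivity).} The integrand \(x^2(1-x^2)^{n-1}(-\log x)^{s-1}\) is nonnegative on \([0,1]\), continuous and strictly positive on \((0,1)\), and integrable. The integral is therefore strictly positive, so \(q_{s,n}>0\).

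There is no serious obstacle here. The only points needing care are the domination argument in Step 2 and the fact that \(H_s\) is analytic on the whole disk. The latter is needed only if one wants to identify the series with \(H_s\) via analytic continuation, and Step 2 already gives the identity directly for every \(|w|<1\).
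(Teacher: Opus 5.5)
Your proposal is correct and follows the paper's proof essentially step for step: the same geometric-series rewriting of \((1-w)/(1-(1-x^2)w)\), term-by-term integration justified by the bound \(|w|^n\) together with \eqref{eq-gamma}, binomial expansion combined with \eqref{eq:moment} for the finite sum, and positivity read off from the integrand. Your observation that \(w\mapsto w/(1-w)\) sends the unit disk into \(\RePart t>-1/2\) spells out a point the paper uses implicitly when it substitutes into \eqref{eq:F-integral}.
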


\begin{proof}
Substituting \eqref{eq:H-definition} into the integral representation
\eqref{eq:F-integral} gives
\[
 \begin{aligned}
 H_s(w)
 &=
 \frac1{\Gamma(s)}
 \int_0^1
 \frac{(1-w)(-\log x)^{s-1}}
 {1-w+x^2w}\dd x\\
 &=
 \frac1{\Gamma(s)}
 \int_0^1
 \frac{(1-w)(-\log x)^{s-1}}
 {1-(1-x^2)w}\dd x.
 \end{aligned}
\]

For \(|w|<1\), expanding the denominator as a geometric series gives
\[
 \begin{aligned}
 \frac{1-w}{1-(1-x^2)w}
 &=
 1-\frac{x^2w}{1-(1-x^2)w}=
 1-x^2w
 \sum_{m=0}^{\infty}(1-x^2)^mw^m\\
 &=
 1-\sum_{n=1}^{\infty}
 x^2(1-x^2)^{n-1}w^n.
 \end{aligned}
\] 
To justify integration term by term, fix \(|w|\leq\rho<1\).  Then
\[
 \begin{aligned}
 \sum_{n=1}^{\infty}
 \left|
 x^2(1-x^2)^{n-1}w^n
 \right|
 &\leq
 \sum_{n=1}^{\infty}\rho^n=
 \frac{\rho}{1-\rho}.
 \end{aligned}
\]
Together with \eqref{eq-gamma}, this bound justifies integration term by
term for \(|w|\leq\rho<1\).  We therefore obtain

\[
 \begin{aligned}
 H_s(w)
 &=
 \frac1{\Gamma(s)}
 \int_0^1(-\log x)^{s-1}\dd x-
 \sum_{n=1}^{\infty}
 \left[
 \frac1{\Gamma(s)}
 \int_0^1
 x^2(1-x^2)^{n-1}
 (-\log x)^{s-1}\dd x
 \right]w^n.
 \end{aligned}
\]
Comparison with \eqref{eq:H-expansion}, using
\eqref{eq-gamma}, now yields
\[
 q_{s,n}
 =
 \frac1{\Gamma(s)}
 \int_0^1
 x^2(1-x^2)^{n-1}
 (-\log x)^{s-1}\dd x,
\]
This is \eqref{eq:q-integral}.  To recover the finite formula, expand
\[
 (1-x^2)^{n-1}
 =
 \sum_{j=0}^{n-1}
 (-1)^j\binom{n-1}{j}x^{2j}.
\]
Substituting this finite expansion into \eqref{eq:q-integral} and
using \eqref{eq:moment} gives
\[
 \begin{aligned}
 q_{s,n}
 &=
 \frac1{\Gamma(s)}
 \sum_{j=0}^{n-1}
 (-1)^j\binom{n-1}{j}
 \int_0^1
 x^{2j+2}(-\log x)^{s-1}\dd x\\
 &=
 \sum_{j=0}^{n-1}
 \frac{(-1)^j}{(2j+3)^s}
 \binom{n-1}{j}.
 \end{aligned}
\]
Relabeling \(k=j+1\) gives \eqref{eq:q-finite}.  Positivity is
immediate from \eqref{eq:q-integral}: every factor in the integrand is
positive on \(0<x<1\), and \(\Gamma(s)>0\).
\end{proof}

\begin{remark}\label{rem:euler-beta}
At \(w=1/2\), we have
\(H_s(1/2)=F_s(1)=\beta(s)\).  Thus, \eqref{eq:H-expansion} becomes
\[
 \beta(s)
 =
 1-\sum_{n=1}^{\infty}\frac{q_{s,n}}{2^n}=1-\sum_{n=1}^{\infty}\sum_{k=1}^{n}
 \frac{(-1)^{k+1}}{2^n(2k+1)^s}
 \binom{n-1}{k-1}.
\]
This is the classical Euler transform of the alternating beta series.
The second transformation below has the same general shape, but moves
the evaluation point from \(1/2\) to \(1/3\).
\end{remark}

\subsection{Closed forms for the ordinary coefficients}

\begin{proposition}\label{prop:q-beta}
Let \(s\geq1\) be an integer.  Then, for every \(n\geq1\),
\begin{equation}\label{eq:q-beta-derivative}
 q_{s,n}
 =\frac{(-1)^{s-1}}{2^s\Gamma(s)}
 \left.
 \frac{\partial^{s-1}}{\partial a^{s-1}}B(a,n)
 \right|_{a=3/2}=\frac{B(3/2,n)}{2^s\Gamma(s)}
 Y_{s-1}(\Delta_{1,n},\ldots,\Delta_{s-1,n}).
\end{equation}
In this formula, \(Y_m\) is the complete exponential Bell polynomial
and
\begin{equation}\label{eq:Dj}
 \Delta_{j,n}=(j-1)!
 \left\{2^j\bigl(H_{2n+1}^{(j)}-1\bigr)-H_n^{(j)}\right\}.
\end{equation}
Here
\(
 H_N^{(j)}=\sum_{k=1}^{N}\frac1{k^j}
\)
denotes the generalized harmonic number of order \(j\), and we write
\(H_N=H_N^{(1)}\).  When \(s=1\), the convention \(Y_0=1\) is used.
\end{proposition}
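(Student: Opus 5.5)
Starting from the integral formula \eqref{eq:q-integral} of Theorem~\ref{thm:H-coefficients}, substitute \(x^2=y\), so \(x=y^{1/2}\), \(\dd x=\tfrac12 y^{-1/2}\dd y\) and \(-\log x=-\tfrac12\log y\). This gives
\[
 q_{s,n}=\frac{1}{2^s\Gamma(s)}\int_0^1 y^{1/2}(1-y)^{n-1}(-\log y)^{s-1}\dd y .
\]
Next I use the elementary identity \(\partial_a^{\,m} y^{a-1}=y^{a-1}(\log y)^m\). With \(m=s-1\) and \(a=3/2\) it reads \(y^{1/2}(-\log y)^{s-1}=(-1)^{s-1}\partial_a^{s-1}y^{a-1}\big|_{a=3/2}\). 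Differentiating \(B(a,n)=\int_0^1y^{a-1}(1-y)^{n-1}\dd y\) under the integral sign is legitimate near \(a=3/2\), since the derivatives are dominated by \(y^{1/4}|\log y|^{s-1}\), which is integrable. This yields the first equality of \eqref{eq:q-beta-derivative}.

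For the Bell-polynomial form, set \(f(a)=\log B(a,n)\). Since \(B(a,n)=\Gamma(a)\Gamma(n)/\Gamma(a+n)\) and \(n\) is a positive integer,
\[
 f(a)=\log\Gamma(n)-\sum_{k=0}^{n-1}\log(a+k).
\]
Hence \(f^{(j)}(a)=(-1)^{j}(j-1)!\sum_{k=0}^{n-1}(a+k)^{-j}\) for \(j\ge1\). Faà di Bruno's formula for \(e^{f}\) gives
\[
 \partial_a^{m}B(a,n)=B(a,n)\,Y_m\bigl(f'(a),\dots,f^{(m)}(a)\bigr).
\]
The complete Bell polynomial is homogeneous of weight \(m\): \(Y_m(\lambda x_1,\lambda^2x_2,\dots)=\lambda^mY_m(x_1,x_2,\dots)\). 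Applying this with \(\lambda=-1\), the factor \((-1)^{s-1}\) cancels the signs, leaving arguments \((j-1)!\sum_{k=0}^{n-1}(k+3/2)^{-j}\).

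Finally, evaluate \(\sum_{k=0}^{n-1}(k+3/2)^{-j}=2^j\sum_{k=0}^{n-1}(2k+3)^{-j}\). The odd integers \(3,5,\dots,2n+1\) have reciprocal-power sum \(H^{(j)}_{2n+1}-1-2^{-j}H^{(j)}_n\), obtained by removing \(1\) and the even terms. Multiplying by \(2^j\) gives \(2^j(H_{2n+1}^{(j)}-1)-H_n^{(j)}\), so the \(j\)th argument equals \(\Delta_{j,n}\) as in \eqref{eq:Dj}. The case \(s=1\) is the trivial \(Y_0=1\) statement \(q_{1,n}=B(3/2,n)/2\).

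The only delicate points are the sign/weight bookkeeping in the Bell polynomial and the justification of differentiating under the integral. Neither is a real obstacle.
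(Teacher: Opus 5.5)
Your proposal is correct and follows essentially the same route as the paper: the substitution $y=x^2$, differentiating Euler's beta integral $s-1$ times in its first parameter, the logarithmic derivatives of $B(a,n)=(n-1)!/\prod_{k=0}^{n-1}(a+k)$, Fa\`a di Bruno in Bell-polynomial form with the weight-homogeneity sign cancellation, and the odd/even split of $\sum_{k=0}^{n-1}(k+3/2)^{-j}$. Your justification of differentiation under the integral sign is a small addition the paper omits. For the lower-order derivatives, take the dominating function $y^{1/4}\bigl(1+|\log y|^{s-1}\bigr)$ instead of $y^{1/4}|\log y|^{s-1}$, since $|\log y|^{j}\le|\log y|^{s-1}$ fails near $y=1$ when $j<s-1$.
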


\begin{proof}
The substitution \(y=x^2\) in \eqref{eq:q-integral} gives
\[
 q_{s,n}
 =
 \frac1{2^s\Gamma(s)}
 \int_0^1y^{1/2}(1-y)^{n-1}
 (-\log y)^{s-1}\dd y.
\]
Using Euler's beta integral
\cite[Eq.~(1.1.12)]{AndrewsAskeyRoy1999}
and differentiating \(s-1\) times with respect to its first parameter,
we obtain
\[
 q_{s,n}
 =
 \frac{(-1)^{s-1}}{2^s\Gamma(s)}
 \left.
 \frac{\partial^{s-1}}{\partial a^{s-1}}B(a,n)
 \right|_{a=3/2}.
\]
This proves the first equality in
\eqref{eq:q-beta-derivative}.  For the second, write
\(
 L(a)=\log B(a,n).
\)
Since
\[
 B(a,n)
 =
 \frac{(n-1)!}{a(a+1)\cdots(a+n-1)},
\]
we have
\[
 L^{(j)}(a)
 =
 (-1)^j(j-1)!
 \sum_{\ell=0}^{n-1}\frac1{(a+\ell)^j}.
\]
At \(a=3/2\), separating the odd and even terms gives
\[
 (j-1)!
 \sum_{\ell=0}^{n-1}\frac1{(\ell+3/2)^j}
 =
 (j-1)!
 \left\{
 2^j\bigl(H_{2n+1}^{(j)}-1\bigr)-H_n^{(j)}
 \right\}
 =\Delta_{j,n}.
\]
Hence
\(L^{(j)}(3/2)=(-1)^j\Delta_{j,n}\).  The Fa\`a di Bruno formula in Bell-polynomial form
\cite[Ch.~III, Thm.~C]{Comtet1974}, together with the homogeneity of
the Bell polynomials, gives
\[
 (-1)^{s-1}
 \left.
 \frac{\partial^{s-1}}{\partial a^{s-1}}B(a,n)
 \right|_{a=3/2}
 =
 B(3/2,n)
 Y_{s-1}(\Delta_{1,n},\ldots,\Delta_{s-1,n}).
\]
Inserting this identity into the first formula gives the result.
\end{proof}

\begin{corollary}\label{cor:beta-first-three}
The ordinary Euler transform yields the following formulas for
\(s=1,2,3\):
\begin{align*}
 & \frac{\pi}{4}
 =
 1-\sum_{n=1}^{\infty}
 \frac{2^{n-1}}
 {n(2n+1)\binom{2n}{n}}, \\&
 \Gconst
 =
 1-\sum_{n=1}^{\infty}
 \frac{
 2^{n-2}\bigl(2H_{2n+1}-H_n-2\bigr)}
 {n(2n+1)\binom{2n}{n}},\\&
 \frac{\pi^3}{32}
 =
 1-\sum_{n=1}^{\infty}
 \frac{
 2^{n-4}
 \left[
 \bigl(2H_{2n+1}-H_n-2\bigr)^2
 +4H_{2n+1}^{(2)}-H_n^{(2)}-4
 \right]}
 {n(2n+1)\binom{2n}{n}}.
\end{align*}
\end{corollary}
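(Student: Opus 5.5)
The plan is to specialize Remark~\ref{rem:euler-beta} to \(s=1,2,3\) and evaluate \(q_{s,n}\) through Proposition~\ref{prop:q-beta}. By Remark~\ref{rem:euler-beta},
\[
 \beta(s)=1-\sum_{n\ge1}\frac{q_{s,n}}{2^n},
\]
and the classical values \(\beta(1)=\pi/4\), \(\beta(2)=\Gconst\), \(\beta(3)=\pi^3/32\) supply the left-hand sides. It then remains to show that \(q_{s,n}/2^n\) equals the displayed summand in each case.

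The first step is to compute \(B(3/2,n)\) in closed form. We have
\[
 B(3/2,n)=\frac{\Gamma(3/2)\Gamma(n)}{\Gamma(n+3/2)}.
\]
Using \(\Gamma(n+3/2)=\frac{(2n+2)!}{4^{n+1}(n+1)!}\sqrt\pi\), this simplifies to
\[
 B(3/2,n)=\frac{4^n}{n(2n+1)\binom{2n}{n}}.
\]
I will double-check this at \(n=1\): the left side is \(\int_0^1 y^{1/2}\,dy=2/3\), and the right side is \(4/(1\cdot3\cdot2)=2/3\).

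Next, insert this into \eqref{eq:q-beta-derivative}. The common factor becomes
\[
 \frac{q_{s,n}}{2^n}=\frac{2^{n}}{2^s\Gamma(s)\,n(2n+1)\binom{2n}{n}}\,Y_{s-1}.
\]
\begin{itemize}
\item For \(s=1\), \(Y_0=1\), so the summand is \(2^{n-1}/(n(2n+1)\binom{2n}{n})\).
\item For \(s=2\), \(Y_1=\Delta_{1,n}=2(H_{2n+1}-1)-H_n=2H_{2n+1}-H_n-2\), and the prefactor is \(2^{n-2}\).
\item For \(s=3\), \(\Gamma(3)=2\) gives the prefactor \(2^{n-4}\), and \(Y_2(x_1,x_2)=x_1^2+x_2\) with \(\Delta_{2,n}=4(H^{(2)}_{2n+1}-1)-H^{(2)}_n\). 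Together these reproduce the bracket in the third formula.
\end{itemize}

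The main obstacle is only bookkeeping. The points to get right are the power of 2 and the gamma-ratio simplification for \(B(3/2,n)\), together with the convention \(Y_2=x_1^2+x_2\) for complete exponential Bell polynomials. Convergence needs no separate argument, because the identity already holds by Remark~\ref{rem:euler-beta}.
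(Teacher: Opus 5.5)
Your proposal is correct and matches the paper's proof: you compute \(B(3/2,n)=4^n/\bigl(n(2n+1)\binom{2n}{n}\bigr)\) from the gamma-function form, insert it into Proposition~\ref{prop:q-beta} with \(Y_0=1\), \(Y_1=x_1\), \(Y_2=x_1^2+x_2\), and substitute the result into Remark~\ref{rem:euler-beta}. Your explicit value of \(\Gamma(n+3/2)\), the \(n=1\) check, and the power-of-two prefactors \(2^{n-1},2^{n-2},2^{n-4}\) are all correct, and they fill in arithmetic that the paper leaves implicit.
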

\begin{proof}
First note the beta-function identity
\[
 B(3/2,n)
 =\frac{4^n}{n(2n+1)\binom{2n}{n}}
\]
follows from \(B(a,n)=\Gamma(a)\Gamma(n)/\Gamma(a+n)\).
Proposition~\ref{prop:q-beta} may now be applied with
\[
 Y_0=1,\qquad
 Y_1(x_1)=x_1,\qquad
 Y_2(x_1,x_2)=x_1^2+x_2.
\]
For \(s=1,2,3\), substituting the resulting coefficients into
Remark~\ref{rem:euler-beta} gives the three formulas.
\end{proof}

\subsection{The accelerated transformation}

The second substitution is chosen for the half-plane of analyticity
in Proposition~\ref{prop:F-integral}.  Indeed,
\eqref{eq:transform-intro} maps \(|u|<1\) onto \(\RePart t>-1\), since
\(
 1+t=\frac{1+u}{1-u},
\)
and it sends \(u=1/3\) to \(t=1\).  Set
\begin{equation}\label{eq:A-definition}
 A_s(u)=F_s\!\left(\frac{2u}{1-u}\right).
\end{equation}

\begin{theorem}\label{thm:p-coefficients}
For real \(s>0\) and \(|u|<1\),
\begin{equation}\label{eq:A-expansion}
 A_s(u)=1-\sum_{n=1}^{\infty}p_{s,n}u^n,
\end{equation}
where
\begin{align}
 p_{s,n}
 &=\sum_{k=1}^{n}
   \frac{(-1)^{k+1}2^k}{(2k+1)^s}
   \binom{n-1}{k-1},                     \label{eq:p-finite}\\
 &=\frac2{\Gamma(s)}
   \int_0^1x^2(1-2x^2)^{n-1}
   (-\log x)^{s-1}\dd x.                 \label{eq:p-integral}
\end{align}

\end{theorem}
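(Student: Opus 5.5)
I will mirror the proof of Theorem~\ref{thm:H-coefficients}, with the substitution \(t=2u/(1-u)\) in place of \(t=w/(1-w)\). Since this map sends \(|u|<1\) into \(\RePart t>-1\), Proposition~\ref{prop:F-integral} applies, and inserting \(t=2u/(1-u)\) into \eqref{eq:F-integral} gives
\[
 A_s(u)=\frac1{\Gamma(s)}\int_0^1\frac{(1-u)(-\log x)^{s-1}}{1-u+2x^2u}\dd x
 =\frac1{\Gamma(s)}\int_0^1\frac{(1-u)(-\log x)^{s-1}}{1-(1-2x^2)u}\dd x .
\]
The algebraic step is then
\[
 \frac{1-u}{1-(1-2x^2)u}=1-\frac{2x^2u}{1-(1-2x^2)u}
 =1-\sum_{n=1}^{\infty}2x^2(1-2x^2)^{n-1}u^n .
\]
Integrating term by term, and using \eqref{eq-gamma} for the constant term, identifies \(p_{s,n}\) as in \eqref{eq:p-integral}.

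The one point needing care, and the main difference from the earlier proof, is justifying the geometric expansion and the interchange. This time \(1-2x^2\) ranges over \([-1,1]\) rather than \([0,1]\). However, \(|1-2x^2|\le1\) still holds on \([0,1]\), and \(|2x^2|\le2\), so for \(|u|\le\rho<1\) we have
\[
 \sum_{n\ge1}\bigl|2x^2(1-2x^2)^{n-1}u^n\bigr|\le\frac{2\rho}{1-\rho},
\]
uniformly in \(x\). Combined with the integrability of \((-\log x)^{s-1}\) from \eqref{eq-gamma}, dominated convergence justifies interchanging sum and integral on every disk \(|u|\le\rho<1\). Hence the expansion \eqref{eq:A-expansion} holds on \(|u|<1\). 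Uniqueness of power-series coefficients then gives \eqref{eq:p-integral}.

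For the finite formula \eqref{eq:p-finite}, expand
\[
 (1-2x^2)^{n-1}=\sum_{j=0}^{n-1}(-1)^j2^j\binom{n-1}{j}x^{2j}
\]
and apply the moment identity \eqref{eq:moment} with exponent \(2j+2\). This yields
\[
 p_{s,n}=\sum_{j=0}^{n-1}\frac{(-1)^j2^{j+1}}{(2j+3)^s}\binom{n-1}{j}.
\]
Relabeling \(k=j+1\) produces \eqref{eq:p-finite}.

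Note that, unlike in Theorem~\ref{thm:H-coefficients}, positivity of \(p_{s,n}\) is not claimed here. It cannot be read off \eqref{eq:p-integral}, because \(1-2x^2\) changes sign on \([0,1]\). So no positivity step is needed, and I expect no real obstacle beyond the uniform bound above.
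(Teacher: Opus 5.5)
Your proposal is correct and follows the paper's own route. You substitute \(t=2u/(1-u)\) into the integral representation, expand \((1-u)/(1-(1-2x^2)u)\) geometrically, integrate term by term, and finally expand \((1-2x^2)^{n-1}\) and apply the moment identity. Your domination bound \(2\rho/(1-\rho)\), based on \(|1-2x^2|\le1\), spells out the interchange justification that the paper leaves implicit when it says the calculation parallels Theorem~\ref{thm:H-coefficients}.
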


\begin{proof}
The calculation parallels the proof of
Theorem~\ref{thm:H-coefficients}.  Substitute \(t=2u/(1-u)\) into
\eqref{eq:F-integral} and use
\[
 \frac{1-u}{1+(2x^2-1)u}
 =
 1-\sum_{n=1}^{\infty}
 2x^2(1-2x^2)^{n-1}u^n
\]
Integrating term by term gives \eqref{eq:A-expansion} and
\eqref{eq:p-integral}.  Expanding \((1-2x^2)^{n-1}\) and then applying
\eqref{eq:moment} yields \eqref{eq:p-finite}.
\end{proof}

Unlike \(q_{s,n}\), the coefficient \(p_{s,n}\) is not visibly
positive from its integral formula: when \(n\) is even, the integrand
changes sign.  The following comparison resolves this difficulty.

\begin{lemma}\label{lem:log-inequality}
If \(0<y<1/2\) and \(m\geq1\) is real, then
\begin{equation}\label{eq:f-inequality}
 \sqrt y\,[-\log y]^m
 >
 \sqrt{1-y}\,[-\log(1-y)]^m.
\end{equation}
\end{lemma}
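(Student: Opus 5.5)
The plan is to first reduce to the case \(m=1\), and then prove that case by a monotonicity argument on a logarithmic difference.

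\emph{Reduction to \(m=1\).} For \(0<y<1/2\) we have \(0<y<1-y<1\). Hence \(-\log y>-\log(1-y)>0\), so the ratio
\[
 R(y)=\frac{-\log y}{-\log(1-y)}
\]
is strictly greater than \(1\). For real \(m\geq1\) this gives \(R(y)^m\geq R(y)\). Therefore
\[
 \frac{\sqrt y\,[-\log y]^m}{\sqrt{1-y}\,[-\log(1-y)]^m}
 =\sqrt{\tfrac{y}{1-y}}\;R(y)^m
 \geq\sqrt{\tfrac{y}{1-y}}\;R(y),
\]
and it suffices to show that the right-hand side exceeds \(1\). That is exactly \eqref{eq:f-inequality} with \(m=1\).

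\emph{The case \(m=1\).} Put \(k(y)=\tfrac12\log y+\log(-\log y)\) for \(0<y<1\), and \(D(y)=k(y)-k(1-y)\) on \((0,1/2]\). The claim is \(D(y)>0\) for \(0<y<1/2\). Since \(D(1/2)=0\), it is enough to show \(D'(y)<0\) on \((0,1/2)\). Differentiating gives
\[
 D'(y)=\frac1{2y}+\frac1{2(1-y)}+\frac1{y\log y}+\frac1{(1-y)\log(1-y)}.
\]
So \(D'(y)<0\) is equivalent to
\[
 \frac1{2y(1-y)}<\frac1{y\,|\log y|}+\frac1{(1-y)\,|\log(1-y)|}.
\]

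\emph{Key estimate.} This is where a small trick is needed. We use the elementary bound \(-\log(1-y)\leq y/(1-y)\), which follows from \(\log(1+v)\leq v\) with \(v=y/(1-y)\). It gives
\[
 \frac1{(1-y)|\log(1-y)|}\geq\frac1y .
\]
For \(y\leq1/2\) we have \(2(1-y)\geq1\), so \(\frac1y\geq\frac1{2y(1-y)}\). The remaining term \(1/(y|\log y|)\) is strictly positive. Adding these gives the strict inequality, so \(D'<0\) on \((0,1/2)\).

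Hence \(D(y)>D(1/2)=0\) for \(0<y<1/2\). Exponentiating yields the case \(m=1\), and by the reduction above, \eqref{eq:f-inequality} holds for all real \(m\geq1\).
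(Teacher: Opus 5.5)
Your proof is correct and follows essentially the paper's route: the same reduction to \(m=1\) via \(R^m\geq R\), then a monotonicity argument anchored at the symmetric point \(y=1/2\) (equivalently \(a=1\)), driven by the same elementary bound \(-\log(1-y)=\log(1+a^{-2})\leq y/(1-y)=a^{-2}\). The only difference is cosmetic: you differentiate the logarithm of the ratio in the variable \(y\), while the paper differentiates the difference \(h(a)=\log(1+a^2)-a\log(1+a^{-2})\) in the variable \(a=\sqrt{(1-y)/y}\).
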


\begin{proof}
Put
\(
 a=\sqrt{\frac{1-y}{y}}>1.
\)
Then
\(
 -\log y=\log(1+a^2),\,
 -\log(1-y)=\log(1+a^{-2}).
\)
We first establish
\begin{equation}\label{eq:h-positive}
 \log(1+a^2)>a\log(1+a^{-2}).
\end{equation}
Let
\(
 h(a)=\log(1+a^2)-a\log(1+a^{-2}).
\)
We have \(h(1)=0\), and
\[
 h'(a)
 =\frac{2(a+1)}{1+a^2}-\log(1+a^{-2}).
\]
The elementary inequality \(\log(1+z)<z\), \(z>0\), gives
\[
 h'(a)>
 \frac{2(a+1)}{1+a^2}-\frac1{a^2}>0
 \qquad(a>1).
\]
Hence \eqref{eq:h-positive} holds.  Now let
\[
 R=\frac{-\log y}{-\log(1-y)},
\]
then \(R>a>1\).  Since \(m\geq1\), we have \(R^m\geq R>a\).
Therefore,
\[
 \frac{\sqrt y\,[-\log y]^m}
 {\sqrt{1-y}\,[-\log(1-y)]^m}
 =\frac1a
 \left(\frac{-\log y}{-\log(1-y)}\right)^m>1,
\]
and \eqref{eq:f-inequality} follows.
\end{proof}

\begin{theorem}\label{thm:p-positive}
For every real \(s\geq2\) and every \(n\geq1\),
\( p_{s,n}>0\).
Equivalently, every non-constant Taylor coefficient of \(A_s(u)\) is
strictly negative.
\end{theorem}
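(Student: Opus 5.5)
We start from the integral formula \eqref{eq:p-integral} and substitute \(y=x^2\), exactly as in the proof of Proposition~\ref{prop:q-beta}. Since \(-\log x=\tfrac12(-\log y)\) and \(\dd x=\tfrac12 y^{-1/2}\dd y\), we get
\[
 p_{s,n}=\frac{1}{2^{s-1}\Gamma(s)}\int_0^1 \sqrt{y}\,(1-2y)^{n-1}(-\log y)^{s-1}\dd y .
\]
When \(n\) is odd the integrand is nonnegative and not identically zero, so \(p_{s,n}>0\) at once. The real work is for even \(n\), where \((1-2y)^{n-1}\) is positive on \((0,1/2)\) and negative on \((1/2,1)\).

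To handle the sign change, split the integral at \(y=1/2\) and reflect the second half by \(y\mapsto 1-y\). Under this reflection, \(1-2y\) becomes \(-(1-2y)\), so for even \(n\) we have \((1-2(1-y))^{n-1}=-(1-2y)^{n-1}\). This gives
\[
 2^{s-1}\Gamma(s)\,p_{s,n}=\int_0^{1/2}(1-2y)^{n-1}\Bigl[\sqrt{y}\,(-\log y)^{s-1}-\sqrt{1-y}\,(-\log(1-y))^{s-1}\Bigr]\dd y .
\]
For odd \(n\) the same reflection produces a sum of two nonnegative terms, so both parities can be written in this unified form if desired. Now take \(m=s-1\geq1\), which is exactly where the hypothesis \(s\geq2\) enters. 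By Lemma~\ref{lem:log-inequality}, the bracket is strictly positive for \(0<y<1/2\). The factor \((1-2y)^{n-1}\) is also positive there, so the integral is strictly positive.

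Finally, the equivalence with the Taylor coefficients of \(A_s(u)\) is immediate from \eqref{eq:A-expansion}, where the coefficient of \(u^n\) is \(-p_{s,n}\).

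The only step with real content is the pointwise comparison, and it is already supplied by Lemma~\ref{lem:log-inequality}. What remains is bookkeeping: keeping track of the constants in the substitution, and checking that the integrals converge near \(y=0\) and \(y=1\). Near \(y=1\), \(-\log(1-y)\) is singular, but the reflected integrand stays integrable because \((-\log(1-y))^{s-1}\) is integrable. So the splitting is legitimate, and dominated convergence is not even needed.
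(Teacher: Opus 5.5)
Your proposal is correct and follows the paper's proof exactly: substitute \(y=x^2\) to get the factor \(2^{1-s}/\Gamma(s)\), settle odd \(n\) by nonnegativity, and for even \(n\) split at \(y=1/2\), reflect \(y\mapsto 1-y\), and apply Lemma~\ref{lem:log-inequality} with \(m=s-1\geq1\). One small correction to your closing bookkeeping: there is no singularity near \(y=1\) to worry about. After reflection \(y\) ranges over \((0,1/2)\), where \(-\log(1-y)\leq\log 2\), and in fact \(\sqrt{y}\,(1-2y)^{n-1}(-\log y)^{s-1}\) extends continuously to all of \([0,1]\).
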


\begin{proof}
Let \(m=s-1\geq1\).  After the substitution \(y=x^2\), divide out the positive factor
\(2^{1-s}/\Gamma(s)\).  We are left with
\begin{equation}\label{eq:p-y-integral}
 \int_0^1
 y^{1/2}(1-2y)^{n-1}(-\log y)^m\dd y.
\end{equation}
For odd \(n\), the integrand is nonnegative and not identically zero.
For even \(n\), split the integral at \(1/2\) and replace \(y\) by
\(1-y\) in the second half.  This gives
\[
 \int_0^{1/2}(1-2y)^{n-1}
 \left\{
 \sqrt y\,(-\log y)^m
 -\sqrt{1-y}\,[-\log(1-y)]^m
 \right\}\dd y.
\]
Lemma~\ref{lem:log-inequality} makes the bracket positive on
\(0<y<1/2\), while the remaining factor is also positive.  Thus,
\(p_{s,n}>0\).
\end{proof}

\begin{remark}\label{rem:s-threshold}
The condition \(s\geq2\) is meaningful.  At \(s=1\), for example,
\[
 [u^2]A_1(u)=\frac2{15}>0,
\]
so the non-constant coefficients no longer have one sign.
\end{remark}

\section{Reciprocal powers and their coefficients}
\label{sec:reciprocal-coefficients}

The two transformations now fit a common framework.  In each case the
transformed function has the form \(1-D(z)\), where
\begin{equation}\label{eq:common-shape}
 D(z)=\sum_{n=1}^{\infty}d_nz^n,\qquad d_n>0.
\end{equation}
For the ordinary Euler transform, \(d_n=q_{s,n}\).  For the
accelerated transform, \(d_n=p_{s,n}\), with \(s\geq2\).

\begin{definition}
For an integer \(r\geq1\), define
\begin{equation}\label{eq:C-definition}
 C_r(z)
 =\frac{1}{(1-D(z))^r}
 =\left(1-\sum_{n=1}^{\infty}d_nz^n\right)^{-r}
 =\sum_{n=0}^{\infty}c_n^{(r)}z^n.
\end{equation}
\end{definition}

\begin{theorem}\label{thm:recurrence}
Let \(r\geq1\) be an integer.  The coefficients in
\eqref{eq:C-definition} satisfy
\begin{equation}\label{eq:recurrence}
 c_0^{(r)}=1,\qquad
 c_n^{(r)}
 =\frac1n\sum_{k=1}^{n}
 \bigl[n+(r-1)k\bigr]d_kc_{n-k}^{(r)}
 \quad(n\geq1).
\end{equation}
In particular, \(c_n^{(r)}>0\).  If all \(d_k\) are rational, then
every \(c_n^{(r)}\) is rational.
\end{theorem}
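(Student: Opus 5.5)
The plan is the standard logarithmic-derivative (J.C.P. Miller) argument for powers of a power series, carried out at the level of formal power series. Since \(1-D(z)\) has constant term \(1\), the reciprocal power \(C_r(z)=(1-D(z))^{-r}\) is a well-defined formal power series with \(c_0^{(r)}=1\). Differentiating gives
\[
 C_r'(z)=r\,D'(z)\,(1-D(z))^{-r-1},
\]
so after multiplying by \(1-D(z)\) we obtain the identity
\[
 (1-D(z))\,C_r'(z)=r\,D'(z)\,C_r(z).
\]

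The next step is to compare the coefficients of \(z^{n-1}\) for \(n\geq1\) on both sides.
\begin{itemize}
\item On the left, the coefficient is \(n c_n^{(r)}-\sum_{k=1}^{n-1}d_k\,(n-k)\,c_{n-k}^{(r)}\). Its sum may be extended to \(k=n\), since that term carries the factor \(n-k=0\).
\item On the right, the coefficient is \(r\sum_{k=1}^{n}k\,d_k\,c_{n-k}^{(r)}\).
\end{itemize}
Solving for \(n c_n^{(r)}\) gives
\[
 n c_n^{(r)}=\sum_{k=1}^{n}\bigl[(n-k)+rk\bigr]d_k c_{n-k}^{(r)}=\sum_{k=1}^{n}\bigl[n+(r-1)k\bigr]d_k c_{n-k}^{(r)},
\]
which is \eqref{eq:recurrence}. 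The only point needing care is the index bookkeeping in the product on the left. There is no analytic issue, because everything is an identity of formal power series.

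Positivity then follows by strong induction on \(n\). We have \(c_0^{(r)}=1>0\). Every weight \(n+(r-1)k\) is at least \(n>0\) because \(r\geq1\), and \(d_k>0\) by \eqref{eq:common-shape}. So \(c_n^{(r)}\) is a positive combination of positive quantities; the sum is nonempty because it contains the \(k=n\) term \(d_n c_0^{(r)}>0\).

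Rationality follows by the same induction. If all \(d_k\) are rational, then \(c_n^{(r)}\) is obtained from rationals by multiplication, addition, and division by the nonzero integer \(n\), so it is rational.
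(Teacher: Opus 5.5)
Your proposal is correct and matches the paper's proof step for step. Both derive the identity \((1-D)C_r'=rD'C_r\), compare coefficients of \(z^{n-1}\) (absorbing the vanishing \(k=n\) term), and then use induction on the recurrence to get positivity and rationality.
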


\begin{proof}
Because \(D(0)=0\), we have \(C_r(0)=1\), so
\(c_0^{(r)}=1\).  Differentiating \(C_r=(1-D)^{-r}\) gives
\[
 C_r'=rD'(1-D)^{-r-1}.
\]
Multiplying both sides by \(1-D\) gives
\(
 (1-D)C_r'=rD'C_r.
\)
Now,
\[
 C_r'(z)=\sum_{m=1}^{\infty}m c_m^{(r)}z^{m-1},
 \qquad
 D'(z)=\sum_{k=1}^{\infty}k d_kz^{k-1}.
\]
For \(n\geq1\), the coefficient of \(z^{n-1}\) in \(C_r'\) is
\(n c_n^{(r)}\), whereas the corresponding coefficient in \(DC_r'\)
is
\[
 \sum_{k=1}^{n-1}
 d_k(n-k)c_{n-k}^{(r)}.
\]
Thus, the coefficient on the left-hand side is
\[
 n c_n^{(r)}
 -
 \sum_{k=1}^{n-1}
 (n-k)d_kc_{n-k}^{(r)}.
\]
On the right-hand side, the coefficient of \(z^{n-1}\) in \(D'C_r\)
is
\[
 \sum_{k=1}^{n}
 k d_kc_{n-k}^{(r)}.
\]
Equating these coefficients gives
\[
 n c_n^{(r)}
 -
 \sum_{k=1}^{n-1}
 (n-k)d_kc_{n-k}^{(r)}
 =
 r\sum_{k=1}^{n}
 k d_kc_{n-k}^{(r)}.
\]
The term \(k=n\) may be included in the first sum because its factor
\(n-k\) is zero.  Hence
\[
 \begin{aligned}
 n c_n^{(r)}
 &=
 \sum_{k=1}^{n}
 \bigl[(n-k)+rk\bigr]d_kc_{n-k}^{(r)}=
 \sum_{k=1}^{n}
 \bigl[n+(r-1)k\bigr]d_kc_{n-k}^{(r)}.
 \end{aligned}
\]
Division by \(n\) yields \eqref{eq:recurrence}.  Positivity now follows
by induction: \(c_0^{(r)}=1\), and every term on the right-hand side is
positive when the preceding coefficients are positive.  The same
recurrence also preserves rationality, since it uses only finite sums,
products, and division by the integer \(n\).
\end{proof}

\begin{theorem}
\label{thm:nonrecursive}
Let \(r\geq1\) be an integer.  For \(n\geq1\),
\begin{align}
 c_n^{(r)}
 &=
 \sum_{j=1}^{n}\binom{r+j-1}{j}
 \sum_{\substack{n_1+\cdots+n_j=n\\n_i\geq1}}
 d_{n_1}\cdots d_{n_j}, \label{eq:composition-form}\\
 &=
 \frac1{n!}\sum_{j=1}^{n}
 \poch{r}{j}
 B_{n,j}(1!d_1,2!d_2,\ldots), \label{eq:bell-form}
\end{align}
where \(\poch{r}{j}=r(r+1)\cdots(r+j-1)\) and \(B_{n,j}\) is the
partial exponential Bell polynomial.
\end{theorem}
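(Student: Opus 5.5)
The plan is to expand \((1-D(z))^{-r}\) by the negative binomial series and then read off the coefficient of \(z^n\). Both identities are coefficient extractions, so I will work in the ring of formal power series. Because \(D(0)=0\), every power \(D(z)^j\) has order at least \(j\), and compositions with it are well defined.

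\textbf{Step 1: the negative binomial expansion.} For an integer \(r\geq1\), the generalized binomial theorem in \(\mathbb{Q}[[X]]\) gives
\[
 (1-X)^{-r}=\sum_{j=0}^{\infty}\binom{r+j-1}{j}X^j .
\]
I substitute \(X=D(z)\). This is legitimate because \(D\) has zero constant term, so only the terms with \(j\leq n\) contribute to the coefficient of \(z^n\). Hence
\[
 c_n^{(r)}=\sum_{j=0}^{n}\binom{r+j-1}{j}[z^n]D(z)^j .
\]
The \(j=0\) term vanishes when \(n\geq1\).

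\textbf{Step 2: the composition form.} Expanding \(D(z)^j=\bigl(\sum_{m\geq1}d_mz^m\bigr)^j\) gives
\[
 [z^n]D(z)^j=\sum_{\substack{n_1+\cdots+n_j=n\\n_i\geq1}}d_{n_1}\cdots d_{n_j},
\]
where the sum runs over ordered compositions of \(n\). Substituting this into Step 1 yields \eqref{eq:composition-form}.

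\textbf{Step 3: the Bell-polynomial form.} I will use the standard generating function for the partial exponential Bell polynomials (Comtet):
\[
 \frac{1}{j!}\Bigl(\sum_{m\geq1}x_m\frac{z^m}{m!}\Bigr)^j=\sum_{n\geq j}B_{n,j}(x_1,x_2,\ldots)\frac{z^n}{n!}.
\]
Setting \(x_m=m!\,d_m\) makes the inner series equal to \(D(z)\), so
\[
 [z^n]D(z)^j=\frac{j!}{n!}B_{n,j}(1!d_1,2!d_2,\ldots).
\]
Finally, \(\binom{r+j-1}{j}\,j!=\poch{r}{j}\), which gives \eqref{eq:bell-form}.

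The main point of care is Step 1. One must justify the composition of the series \((1-X)^{-r}\) with \(D\) and the truncation at \(j\leq n\). This follows from the order argument \(\operatorname{ord}D^j\geq j\). Alternatively, one can check the resulting expression against the recurrence of Theorem~\ref{thm:recurrence}, since both define the same \(c_n^{(r)}\) from \(c_0^{(r)}=1\). Everything else is routine coefficient bookkeeping.
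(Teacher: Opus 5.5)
Your proposal is correct and follows essentially the same route as the paper: the negative binomial expansion of \((1-D(z))^{-r}\) truncated at \(j\leq n\) because \(D(0)=0\), the Cauchy product giving the composition sum, and Comtet's generating identity with \(x_m=m!\,d_m\) combined with \(j!\binom{r+j-1}{j}=\poch{r}{j}\) for the Bell-polynomial form.
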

\begin{proof}
Because \(D(z)\) has zero constant term, the generalized binomial
theorem applies formally:
\[
 (1-D(z))^{-r}
 =
 \sum_{j=0}^{\infty}
 \binom{r+j-1}{j}D(z)^j.
\]
For formal powers of series, see
\cite[Ch.~I, \S1.12; Ch.~III, \S3.5]{Comtet1974}.
For a fixed \(n\geq1\), the term \(j=0\) contributes only to the
constant coefficient, and \(D(z)^j\) begins in degree \(j\).  Hence
only \(1\leq j\leq n\) can contribute to \([z^n]\).
Multiplying the power series gives and by Cauchy product gives
\[
 \begin{aligned}
 D(z)^j
 &=
 \left(\sum_{m=1}^{\infty}d_mz^m\right)^j=
 \sum_{n=j}^{\infty}
 \left(
 \sum_{\substack{n_1+\cdots+n_j=n\\n_i\geq1}}
 d_{n_1}\cdots d_{n_j}
 \right)z^n.
 \end{aligned}
\]
Therefore,
\[
 [z^n]D(z)^j
 =
 \sum_{\substack{n_1+\cdots+n_j=n\\n_i\geq1}}
 d_{n_1}\cdots d_{n_j}.
\]
Substitution into the binomial expansion gives
\[
 c_n^{(r)}
 =
 \sum_{j=1}^{n}
 \binom{r+j-1}{j}
 \sum_{\substack{n_1+\cdots+n_j=n\\n_i\geq1}}
 d_{n_1}\cdots d_{n_j},
\]
which proves \eqref{eq:composition-form}.

For the Bell-polynomial form, recall that
\[
 B_{n,j}(x_1,x_2,\ldots)
 =
 \sum
 \frac{n!}{m_1!m_2!\cdots}
 \prod_{\ell\geq1}
 \left(\frac{x_\ell}{\ell!}\right)^{m_\ell},
\]
where the sum is taken over all nonnegative integers
\(m_1,m_2,\ldots\) satisfying
\[
 m_1+m_2+\cdots=j,
 \qquad
 m_1+2m_2+3m_3+\cdots=n.
\]
Their standard generating identity is
\[
 \frac1{j!}
 \left(
 \sum_{\ell=1}^{\infty}
 \frac{x_\ell}{\ell!}z^\ell
 \right)^j
 =
 \sum_{n=j}^{\infty}
 B_{n,j}(x_1,x_2,\ldots)\frac{z^n}{n!};
\]
see \cite[Ch.~III, Eq.~(3a')]{Comtet1974}.
Taking \(x_\ell=\ell!d_\ell\), we obtain
\[
 D(z)^j
 =
 j!\sum_{n=j}^{\infty}
 B_{n,j}(1!d_1,2!d_2,\ldots)\frac{z^n}{n!}.
\]
Hence
\[
 [z^n]D(z)^j
 =
 \frac{j!}{n!}
 B_{n,j}(1!d_1,2!d_2,\ldots).
\]
Using this coefficient identity together with
\[
 j!\binom{r+j-1}{j}
 =
 r(r+1)\cdots(r+j-1)
 =
 \poch{r}{j},
\]
gives \eqref{eq:bell-form}.
\end{proof}
The first coefficients, valid for either transform, are
\begin{align}
 c_0^{(r)}&=1, \nonumber\\
 c_1^{(r)}&=rd_1, \nonumber\\
 c_2^{(r)}&=rd_2+\frac{r(r+1)}2d_1^2, \label{eq:first-general}\\
 c_3^{(r)}&=rd_3+r(r+1)d_1d_2
   +\frac{r(r+1)(r+2)}6d_1^3. \nonumber
\end{align}
For each fixed degree \(n\), the coefficient \(c_n^{(r)}\) is a
polynomial in \(r\).

We next evaluate the reciprocal series and record quantitative bounds
for their convergence.

\begin{proposition}\label{prop:zero-free}
For real \(s>0\), both \(H_s(w)\) and \(A_s(u)\) are analytic and do not vanish in the unit disk.
\end{proposition}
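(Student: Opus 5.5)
The plan is to reduce everything to Proposition~\ref{prop:F-integral} and Remark~\ref{rem:positive-real-part}, by showing that both fractional-linear maps send the open unit disk into the half-plane \(\RePart t>-1\). Once that is done, the claim follows by composition: an analytic function composed with an analytic map is analytic, and a function with positive real part is nonzero.

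The first step treats \(H_s\). Its argument is \(t=w/(1-w)\), which is analytic on \(|w|<1\) because the only pole is at \(w=1\). We compute
\[
 1+t=\frac{1}{1-w},
 \qquad
 \RePart\frac1{1-w}=\frac{\RePart(1-\bar w)}{|1-w|^2}=\frac{1-\RePart w}{|1-w|^2}>0
\]
for \(|w|<1\). It follows that \(\RePart t>-1\), and in fact \(\RePart t>-1/2\), since the map sends the disk onto \(\RePart t>-1/2\). Hence \(H_s(w)=F_s(w/(1-w))\) is analytic on the disk. By \eqref{eq:F-positive-real}, \(\RePart H_s(w)>0\), so \(H_s(w)\neq0\).

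The second step treats \(A_s\) in the same way. Here \(t=2u/(1-u)\) is analytic for \(|u|<1\), and \(1+t=(1+u)/(1-u)\). Writing
\[
 \frac{1+u}{1-u}=\frac{(1+u)(1-\bar u)}{|1-u|^2},
\]
we find that the real part of the numerator is \(1-|u|^2>0\). Thus \(\RePart t>-1\) on the disk, and again \eqref{eq:F-positive-real} gives analyticity together with \(\RePart A_s(u)>0\).

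The only step that needs care is Remark~\ref{rem:positive-real-part}, which the paper states briefly, so I will spell it out. For \(\RePart t>-1\) and \(0\le x\le1\), we have \(\RePart(1+tx^2)=1+x^2\RePart t>0\). The reciprocal of a number with positive real part also has positive real part. The weight \((-\log x)^{s-1}/\Gamma(s)\) is positive on \((0,1)\). Integrating therefore gives \(\RePart F_s(t)>0\), because the integrand's real part is strictly positive on a set of positive measure. I expect no other obstacle: the expansions in Theorems~\ref{thm:H-coefficients} and~\ref{thm:p-coefficients} agree with these compositions on the disk, so the power series and the composed functions coincide there.
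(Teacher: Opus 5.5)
Your proposal is correct and follows the paper's proof: you show that \(t=w/(1-w)\) and \(t=2u/(1-u)\) map the unit disk into \(\RePart t>-1\), then use \eqref{eq:F-positive-real} to get analyticity and positive real part. Your explicit real-part computations and your derivation of Remark~\ref{rem:positive-real-part} supply details that the paper only asserts.
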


\begin{proof}
For \(|w|<1\), the image point
\(
 t=\frac{w}{1-w}
\)
lies in \(\RePart t>-1/2\).  Likewise, for \(|u|<1\),
\(
 t=\frac{2u}{1-u}
\)
lies in \(\RePart t>-1\).  In both cases, \eqref{eq:F-positive-real} gives
\[
 \RePart H_s(w)>0,\qquad
 \RePart A_s(u)>0,
\]
respectively.  Hence neither function vanishes.
\end{proof}

We distinguish the reciprocal coefficients of the two transforms by
writing
\begin{align}
 H_s(w)^{-r}
 &=\sum_{n=0}^{\infty}b_{s,r,n}w^n, \label{eq:b-definition}\\
 A_s(u)^{-r}
 &=\sum_{n=0}^{\infty}c_{s,r,n}u^n. \label{eq:c-definition}
\end{align}
Thus, \(b_{s,r,n}\) is obtained from the general coefficient
\(c_n^{(r)}\) by taking \(d_n=q_{s,n}\), while \(c_{s,r,n}\) is
obtained by taking \(d_n=p_{s,n}\).  The indices record the beta parameter and the
reciprocal power; the letters distinguish the two transformations.

\begin{theorem}[Two positive reciprocal-beta series]
\label{thm:main-two-series}
Let \(r\geq1\) be an integer.
\begin{enumerate}[label=(\alph*)]
 \item For real \(s>0\),
 \begin{equation}\label{eq:beta-base2}
  \frac1{\beta(s)^r}
  =\sum_{n=0}^{\infty}\frac{b_{s,r,n}}{2^n},
 \end{equation}
 where the coefficients are obtained from
 \eqref{eq:recurrence} with \(d_n=q_{s,n}\).
 \item For real \(s\geq2\),
 \begin{equation}\label{eq:beta-base3}
  \frac1{\beta(s)^r}
  =\sum_{n=0}^{\infty}\frac{c_{s,r,n}}{3^n},
 \end{equation}
 where the coefficients are obtained from
 \eqref{eq:recurrence} with \(d_n=p_{s,n}\).
\end{enumerate}
All the displayed coefficients are strictly positive.  If \(s\) is
an integer, they are rational.
\end{theorem}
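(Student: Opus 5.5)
The plan is to treat both parts in parallel, using the common framework of Section~\ref{sec:reciprocal-coefficients}. Take part (a) first. By Theorem~\ref{thm:H-coefficients}, \(H_s(w)=1-D(w)\) with \(d_n=q_{s,n}>0\), analytic in \(|w|<1\). By Proposition~\ref{prop:zero-free}, \(H_s\) has no zeros in the unit disk. Hence \(H_s(w)^{-r}\) is analytic on \(|w|<1\), and its Taylor series converges there. Its Taylor coefficients agree with the formal coefficients \(c_n^{(r)}\) of \eqref{eq:C-definition}, because formal reciprocal powers of a power series with constant term \(1\) coincide with the Taylor coefficients of the analytic function near \(0\). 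Therefore \(b_{s,r,n}=c_n^{(r)}\) with \(d_n=q_{s,n}\), and these are given by the recurrence \eqref{eq:recurrence}. Since \(1/2\) lies inside the disk of convergence, we may evaluate at \(w=1/2\). Remark~\ref{rem:euler-beta} gives \(H_s(1/2)=F_s(1)=\beta(s)\), and this yields \eqref{eq:beta-base2}. Note that \(\beta(s)>0\), by \eqref{eq:F-positive-real} or directly, so the \(r\)th power is unambiguous.

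Part (b) is identical with \(A_s(u)\) in place of \(H_s(w)\). By Theorem~\ref{thm:p-coefficients}, \(A_s(u)=1-\sum p_{s,n}u^n\) on \(|u|<1\). Proposition~\ref{prop:zero-free} again gives nonvanishing in the disk, so \(A_s^{-r}\) is analytic in \(|u|<1\). The substitution \(t=2u/(1-u)\) sends \(u=1/3\) to \(t=1\), so \(A_s(1/3)=F_s(1)=\beta(s)\). Evaluating the Taylor series at \(1/3\) gives \eqref{eq:beta-base3}. The hypothesis \(s\ge2\) is not needed for convergence. It enters only through Theorem~\ref{thm:p-positive}, which supplies \(p_{s,n}>0\) so that the framework \eqref{eq:common-shape} applies.

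Positivity follows from Theorem~\ref{thm:recurrence}: since all \(d_k>0\), every \(c_n^{(r)}>0\), and this covers both \(b_{s,r,n}\) and \(c_{s,r,n}\) (with \(b_{s,r,0}=c_{s,r,0}=1\)). For rationality, when \(s\) is a positive integer the finite formulas \eqref{eq:q-finite} and \eqref{eq:p-finite} show that \(q_{s,n},p_{s,n}\in\mathbb{Q}\). The rationality clause of Theorem~\ref{thm:recurrence} then gives rational coefficients.

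The only delicate point is justifying the evaluation at the interior point. We need the radius of convergence of the reciprocal series to be at least \(1\), and this is exactly what zero-freeness (Proposition~\ref{prop:zero-free}) guarantees, via Cauchy--Taylor for analytic functions on a disk. An alternative route avoids analyticity: since \(d_n>0\), Pringsheim-type reasoning applies, and the binomial expansion \(\sum_j\binom{r+j-1}{j}D(z)^j\) converges absolutely whenever \(D(z_0)<1\). Here \(D(1/2)=1-\beta(s)<1\) and \(D(1/3)=1-\beta(s)<1\), using \(\beta(s)>0\). Tonelli's theorem then permits rearranging the positive double series into \eqref{eq:composition-form}, which gives the identity at once. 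I would present this positivity argument as the main justification, since it is self-contained.
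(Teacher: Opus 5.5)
Your first two paragraphs reproduce the paper's proof. Proposition~\ref{prop:zero-free} makes $H_s^{-r}$ and $A_s^{-r}$ analytic on the unit disk, and their Taylor coefficients are the formal ones generated by \eqref{eq:recurrence}. One then evaluates at $1/2$ and $1/3$ using $H_s(1/2)=A_s(1/3)=F_s(1)=\beta(s)$, and positivity and rationality come from Theorem~\ref{thm:recurrence}.

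The Tonelli argument you would promote to the main justification is a genuinely different and valid route. Since $d_n>0$ and $D(z_0)=1-\beta(s)\in(0,1)$, the multiple series
\[
\sum_{j\ge0}\binom{r+j-1}{j}\sum_{n_1,\dots,n_j\ge1}d_{n_1}\cdots d_{n_j}\,z_0^{n_1+\cdots+n_j}
\]
has nonnegative terms and finite sum $\beta(s)^{-r}$. Regrouping by $n=n_1+\cdots+n_j$ therefore yields \eqref{eq:composition-form} evaluated at $z_0$ directly. This avoids Remark~\ref{rem:positive-real-part} and Proposition~\ref{prop:zero-free} entirely; Pringsheim is not actually needed, since Tonelli alone does the work.

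The trade-offs are worth noting. In this route the hypothesis $s\ge2$ in part (b) is used for convergence too, because the rearrangement needs $p_{s,n}>0$. So your remark that $s\ge2$ enters only through Theorem~\ref{thm:p-positive} is accurate only for the analytic route. The paper's route, by contrast, gives the evaluation identity for every $s>0$ and cleanly separates convergence from positivity.

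Also, a single evaluation point does not by itself give the radius-$\ge1$ statement that the paper reuses in Lemma~\ref{lem:tail}, Theorem~\ref{thm:error-bounds}, and Proposition~\ref{prop:root-rate}. Your method recovers it cheaply, though. $F_s(t)>0$ for real $t\ge0$ is immediate from \eqref{eq:F-integral}, so $D(x)<1$ for every $0<x<1$. The same Tonelli argument then gives convergence of $\sum_n c_n^{(r)}x^n$ at every $x\in(0,1)$, hence radius at least $1$.
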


\begin{proof}
By Proposition~\ref{prop:zero-free}, the expansions
\eqref{eq:b-definition} and \eqref{eq:c-definition} converge
throughout the unit disk.  At the relevant evaluation points,
\[
 H_s(1/2)=F_s(1)=\beta(s),
\qquad
 A_s(1/3)=F_s(1)=\beta(s),
\]
so evaluation gives \eqref{eq:beta-base2} and
\eqref{eq:beta-base3}.  The positivity statements follow from
Theorems~\ref{thm:H-coefficients}, \ref{thm:p-positive}, and
\ref{thm:recurrence}.  When \(s\) is an integer, the finite formulas
\eqref{eq:q-finite} and \eqref{eq:p-finite} give rational base
coefficients, and the recurrence preserves rationality.
\end{proof}

\begin{lemma}\label{lem:tail}
Suppose
\[
C(z)=\sum_{n=0}^{\infty}a_nz^n,\qquad a_n\geq0,
\]
has radius of convergence at least \(1\).  If
\(0<x<\rho<1\) and \(N\geq0\) is an integer, then
\begin{equation}\label{eq:positive-tail}
 0\leq C(x)-\sum_{n=0}^{N}a_nx^n
 \leq
 \left(\frac{x}{\rho}\right)^{N+1}C(\rho).
\end{equation}
\end{lemma}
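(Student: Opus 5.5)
The plan is to compare the tail termwise with a geometric factor. The argument uses only nonnegativity of the coefficients and convergence at \(\rho\), so it is elementary.

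First, since the radius of convergence is at least \(1\) and \(0<x<\rho<1\), both \(C(x)=\sum a_nx^n\) and \(C(\rho)=\sum a_n\rho^n\) converge, and their terms are nonnegative. The difference \(C(x)-\sum_{n=0}^{N}a_nx^n\) is exactly the tail \(\sum_{n\geq N+1}a_nx^n\). This tail is a convergent series of nonnegative terms, which gives the lower bound \(0\).

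For the upper bound, write each tail term as
\[
a_nx^n=a_n\rho^n\left(\frac{x}{\rho}\right)^n .
\]
For \(n\geq N+1\) we have \((x/\rho)^n\leq (x/\rho)^{N+1}\), because \(0<x/\rho<1\). Summing these inequalities gives
\[
\sum_{n\geq N+1}a_nx^n\leq\left(\frac{x}{\rho}\right)^{N+1}\sum_{n\geq N+1}a_n\rho^n .
\]
Finally, since \(a_n\rho^n\geq0\), the partial tail \(\sum_{n\geq N+1}a_n\rho^n\) is at most the full sum \(C(\rho)\), which yields \eqref{eq:positive-tail}.

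No step is a real obstacle. The only point needing care is to state that comparing the two tails termwise is legitimate, which holds because both series converge absolutely inside the disk of convergence.
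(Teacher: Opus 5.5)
Your proposal is correct and follows the paper's proof essentially step for step. You identify the remainder as the nonnegative tail, factor \(x^n=\rho^n(x/\rho)^n\), bound \((x/\rho)^n\leq(x/\rho)^{N+1}\) for \(n\geq N+1\), and enlarge the \(\rho\)-tail to the full sum \(C(\rho)\) using nonnegativity.
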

\begin{proof}
The assumptions ensure convergence at both \(x\) and \(\rho\), and
\[
 C(x)-\sum_{n=0}^{N}a_nx^n
 =
 \sum_{n=N+1}^{\infty}a_nx^n.
\]
The remainder is nonnegative.  For \(n\geq N+1\),
\[
 x^n
 =
 \rho^n\left(\frac{x}{\rho}\right)^n.
\]
Because \(0<x/\rho<1\),
\[
 \left(\frac{x}{\rho}\right)^n
 \leq
 \left(\frac{x}{\rho}\right)^{N+1},
\]
and hence
\(
 x^n
 \leq
 \rho^n\left(\frac{x}{\rho}\right)^{N+1}.
\)
Multiplying by \(a_n\) and summing gives
\[
 \begin{aligned}
 C(x)-\sum_{n=0}^{N}a_nx^n
 &=
 \sum_{n=N+1}^{\infty}a_nx^n\leq
 \left(\frac{x}{\rho}\right)^{N+1}
 \sum_{n=N+1}^{\infty}a_n\rho^n\leq
 \left(\frac{x}{\rho}\right)^{N+1}
 \sum_{n=0}^{\infty}a_n\rho^n\\
 &=
 \left(\frac{x}{\rho}\right)^{N+1}C(\rho).
 \end{aligned}
\]
This proves \eqref{eq:positive-tail}; the remainder is strict whenever
one of the omitted coefficients is positive.
\end{proof}

\begin{theorem}\label{thm:error-bounds}
Let \(r\geq1\) and \(N\geq0\) be integers.  For real \(s>0\), put
\[
 R_{s,r,N}^{(2)}
 =
 \frac1{\beta(s)^r}
 -
 \sum_{n=0}^{N}\frac{b_{s,r,n}}{2^n}.
\]
Then, for \(1/2<\rho<1\),
\begin{equation}\label{eq:error-H-exact}
 0<R_{s,r,N}^{(2)}
 \leq
 \left(\frac1{2\rho}\right)^{N+1}
 F_s\!\left(\frac{\rho}{1-\rho}\right)^{-r},
\end{equation}
and
\begin{equation}\label{eq:error-H-simple}
 R_{s,r,N}^{(2)}
 \leq
 (1-\rho)^{-r}
 \left(\frac1{2\rho}\right)^{N+1}.
\end{equation}
If \(s\geq2\), put
\[
 R_{s,r,N}^{(3)}
 =
 \frac1{\beta(s)^r}
 -
 \sum_{n=0}^{N}\frac{c_{s,r,n}}{3^n}.
\]
Then, for \(1/3<\rho<1\),
\begin{equation}\label{eq:error-A-exact}
 0<R_{s,r,N}^{(3)}
 \leq
 \left(\frac1{3\rho}\right)^{N+1}
 F_s\!\left(\frac{2\rho}{1-\rho}\right)^{-r}.
\end{equation}
Moreover,
\begin{equation}\label{eq:error-A-simple}
 R_{s,r,N}^{(3)}
 \leq
 \left(\frac{1+\rho}{1-\rho}\right)^r
 \left(\frac1{3\rho}\right)^{N+1}.
\end{equation}
\end{theorem}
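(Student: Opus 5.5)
Apply Lemma~\ref{lem:tail} directly to \(C(z)=H_s(z)^{-r}\) and \(C(z)=A_s(z)^{-r}\), and then bound \(C(\rho)\) from above.

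By Proposition~\ref{prop:zero-free}, both \(H_s\) and \(A_s\) are analytic and zero-free in the unit disk, so \(H_s^{-r}\) and \(A_s^{-r}\) are analytic there. Their Taylor series \eqref{eq:b-definition} and \eqref{eq:c-definition} therefore have radius of convergence at least \(1\). By Theorem~\ref{thm:recurrence}, combined with Theorem~\ref{thm:H-coefficients} for \(q_{s,n}\) and Theorem~\ref{thm:p-positive} for \(p_{s,n}\) when \(s\geq2\), all coefficients are strictly positive. We take \(x=1/2\) and \(1/2<\rho<1\) in the first case, and \(x=1/3\) and \(1/3<\rho<1\) in the second. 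Theorem~\ref{thm:main-two-series} identifies \(C(x)\) with \(\beta(s)^{-r}\), and Lemma~\ref{lem:tail} gives the upper bound \((x/\rho)^{N+1}C(\rho)\). Since \(H_s(\rho)=F_s(\rho/(1-\rho))\) and \(A_s(\rho)=F_s(2\rho/(1-\rho))\) by definition, this is exactly \eqref{eq:error-H-exact} and \eqref{eq:error-A-exact}. Strict positivity of the remainder follows because the omitted coefficient with index \(N+1\) is strictly positive, as noted at the end of the proof of Lemma~\ref{lem:tail}.

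For the simplified bounds we need a lower bound on \(F_s(t)\) for real \(t\geq0\). From \eqref{eq:F-integral}, using \(0\leq x\leq1\), we have \(1+tx^2\leq1+t\). Combined with \eqref{eq-gamma}, this gives
\[
F_s(t)\geq \frac{1}{1+t}\cdot\frac{1}{\Gamma(s)}\int_0^1(-\log x)^{s-1}\dd x=\frac1{1+t}.
\]
For \(t=\rho/(1-\rho)\) we get \(1+t=1/(1-\rho)\), so \(F_s(t)^{-r}\leq(1-\rho)^{-r}\), which yields \eqref{eq:error-H-simple}. For \(t=2\rho/(1-\rho)\) we get \(1+t=(1+\rho)/(1-\rho)\), which yields \eqref{eq:error-A-simple}.

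There is no real obstacle. The only points needing care are the following.
\begin{enumerate}[label=(\roman*)]
\item The hypotheses of Lemma~\ref{lem:tail}, namely nonnegative coefficients and radius of convergence at least \(1\), must be verified. These come from Theorem~\ref{thm:recurrence} and Proposition~\ref{prop:zero-free}, and the restriction \(s\geq2\) is needed in the accelerated case.
\item The elementary lower bound \(F_s(t)\geq 1/(1+t)\) for \(t\geq0\) uses positivity of the integrand in \eqref{eq:F-integral}.
\end{enumerate}
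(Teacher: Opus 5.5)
Your proposal is correct and follows the paper's proof essentially step for step. You apply Lemma~\ref{lem:tail} to \(H_s^{-r}\) at \(x=1/2\) and to \(A_s^{-r}\) at \(x=1/3\) to get the exact bounds and strict positivity, then use the lower bound \(F_s(t)\geq 1/(1+t)\) for \(t\geq0\) from \eqref{eq:F-integral} for the simplified bounds, and you check the lemma's hypotheses via Proposition~\ref{prop:zero-free} and Theorem~\ref{thm:recurrence} more explicitly than the paper does.
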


\begin{proof}
Apply Lemma~\ref{lem:tail} to \eqref{eq:b-definition} with
\(x=1/2\), and to \eqref{eq:c-definition} with \(x=1/3\).  This gives
the exact bounds; strict positivity follows because every omitted
coefficient is positive.

For real \(t\geq0\), \eqref{eq:F-integral} gives
\[
 F_s(t)\geq\frac1{1+t}
 \frac1{\Gamma(s)}
 \int_0^1(-\log x)^{s-1}\dd x
 =\frac1{1+t}.
\]
At \(t=\rho/(1-\rho)\), the lower bound is \(1-\rho\); at
\(t=2\rho/(1-\rho)\), it is \((1-\rho)/(1+\rho)\).  These two choices
give \eqref{eq:error-H-simple} and \eqref{eq:error-A-simple}.
\end{proof}

For later use, two convenient choices of \(\rho\) are
\begin{align}
 R_{s,r,N}^{(2)}
 &\leq4^r\left(\frac23\right)^{N+1}
 &&(\rho=3/4), \label{eq:error-H-concrete}\\
 R_{s,r,N}^{(3)}
 &\leq3^r\left(\frac23\right)^{N+1}
 &&(\rho=1/2). \label{eq:error-A-concrete}
\end{align}
Alternatively, \(\rho=2/3\) in the accelerated estimate gives
\begin{equation}\label{eq:error-A-half}
 R_{s,r,N}^{(3)}
 \leq
 \frac{F_s(4)^{-r}}{2^{N+1}}
 \leq\frac{5^r}{2^{N+1}}.
\end{equation}
These bounds are intentionally simple; the parameter \(\rho\) may be
optimized for chosen values of \(N\), \(s\), and \(r\).

\begin{proposition}\label{prop:root-rate}
For fixed real \(s>0\) and integer \(r\geq1\),
\begin{equation}
 \limsup_{n\to\infty}
 \left(\frac{b_{s,r,n}}{2^n}\right)^{1/n}
 =\frac12. \label{eq:root-H}
\end{equation}
If \(s\geq2\), then also
\begin{equation}
 \limsup_{n\to\infty}
 \left(\frac{c_{s,r,n}}{3^n}\right)^{1/n}
 =\frac13. \label{eq:root-A}
\end{equation}
\end{proposition}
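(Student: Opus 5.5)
We need limsup of b_n^{1/n} = 1, i.e., radius of convergence of H_s(w)^{-r} is exactly 1. By Cauchy–Hadamard, \(\limsup (b_{s,r,n}/2^n)^{1/n} = \tfrac12\limsup b_{s,r,n}^{1/n} = \tfrac{1}{2R}\), where \(R\) is the radius of convergence of \eqref{eq:b-definition}. So the whole task is to show \(R=1\) for \(H_s^{-r}\), and likewise for \(A_s^{-r}\) when \(s\geq2\).

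The lower bound \(R\geq1\) is already in hand. Proposition~\ref{prop:zero-free} shows that \(H_s\) and \(A_s\) are analytic and zero-free on \(|z|<1\), so \(H_s^{-r}\) and \(A_s^{-r}\) are analytic there. For the upper bound \(R\leq1\), I will use Pringsheim's theorem, since the coefficients are positive (Theorem~\ref{thm:recurrence}, together with positivity of \(q_{s,n}\) and, for \(s\geq2\), of \(p_{s,n}\)). If \(R>1\), then \(C(z)=H_s(z)^{-r}\) would be analytic near \(z=1\). In particular \(C(x)\) would stay bounded as \(x\to1^-\), with a finite positive limit, since the coefficients are positive and \(C(x)\) is increasing. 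I will show instead that \(C(x)\to+\infty\), that is, \(H_s(x)\to0\) as \(x\to1^-\), and similarly for \(A_s\).

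For \(H_s\), set \(t=x/(1-x)\to+\infty\). By \eqref{eq:F-integral}, \(F_s(t)=\Gamma(s)^{-1}\int_0^1(-\log y)^{s-1}(1+ty^2)^{-1}\dd y\). The integrand decreases to \(0\) for every \(y\in(0,1]\) and is dominated by \((-\log y)^{s-1}\), which is integrable by \eqref{eq-gamma}. Dominated convergence therefore gives \(F_s(t)\to0\). The same argument applies to \(A_s(x)=F_s(2x/(1-x))\), because \(2x/(1-x)\to+\infty\) as well. Hence \(C(x)=F_s(\cdot)^{-r}\to\infty\), which contradicts \(R>1\). So \(R=1\) in both cases, and \eqref{eq:root-H} and \eqref{eq:root-A} follow.

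An alternative that avoids Pringsheim: positivity of the coefficients gives \(\sum_n a_nx^n=C(x)\) for \(0<x<1\). If \(\limsup a_n^{1/n}<1\), the series would converge at some \(\rho>1\), so \(C\) would be bounded on \([0,1)\) by \(\sum a_n\rho^n\), again a contradiction. I will use this elementary version. The only delicate step is the dominated-convergence limit \(F_s(t)\to0\) as \(t\to\infty\), and the domination by \((-\log y)^{s-1}\) settles it for all real \(s>0\).
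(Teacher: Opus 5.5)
Your proposal is correct and follows essentially the same route as the paper. Proposition~\ref{prop:zero-free} gives radius at least \(1\), dominated convergence in \eqref{eq:F-integral} gives \(F_s(t)\to0\) as \(t\to+\infty\) so that \(H_s^{-r}\) and \(A_s^{-r}\) are unbounded at \(1^-\), and this forces radius exactly \(1\) and the stated rates by Cauchy--Hadamard. Your elementary boundedness step (convergence at some \(\rho>1\) would bound \(C\) on \([0,1)\) by \(\sum a_n\rho^n\)) simply makes explicit the step the paper leaves implicit when it passes from ``unbounded'' to ``radius exactly \(1\)''.
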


\begin{proof}
Proposition~\ref{prop:zero-free} gives analyticity in the open unit
disk.  Along the positive real axis, dominated convergence in
\eqref{eq:F-integral} shows that
\[
 F_s(t)\longrightarrow0\qquad(t\to+\infty).
\]
Thus, \(H_s(w)^{-r}\) and \(A_s(u)^{-r}\) become unbounded as
\(w\to1^-\) and \(u\to1^-\), respectively.  Their Taylor series
therefore have radius exactly \(1\).  The Cauchy--Hadamard formula yields
\[
 \limsup_{n\to\infty}|b_{s,r,n}|^{1/n}=1,
\]
and, when \(s\geq2\),
\[
 \limsup_{n\to\infty}|c_{s,r,n}|^{1/n}=1.
\]
Since the relevant coefficients are positive, the absolute values may
be dropped.  Division by \(2^n\) and \(3^n\) gives the stated root
rates.
\end{proof}

\section{Reciprocal powers of even beta values}
\label{sec:even-beta}

Applying the general construction at positive even integers produces a
single family that includes the Catalan case \(\beta(2)=\Gconst\).

\begin{theorem}[Reciprocal powers of even beta values]
\label{thm:all-even-beta}
For all integers \(m,r\geq1\),
\begin{equation}\label{eq:all-even-beta}
\begin{aligned}
 \frac1{\beta(2m)^r}
 &=1+
 \sum_{n=1}^{\infty}\frac1{3^n}
 \sum_{j=1}^{n}\binom{r+j-1}{j}\\
 &\qquad {}\times
 \sum_{\substack{n_1+\cdots+n_j=n\\n_i\geq1}}
 \prod_{\ell=1}^{j}
 \left[
 \sum_{k=1}^{n_\ell}
 \frac{(-1)^{k+1}2^k}{(2k+1)^{2m}}
 \binom{n_\ell-1}{k-1}
 \right].
\end{aligned}
\end{equation}
Every term of the series following the initial term \(1\) is a
positive rational number.
\end{theorem}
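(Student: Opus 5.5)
The identity \eqref{eq:all-even-beta} should be the accelerated series of Theorem~\ref{thm:main-two-series}(b), specialized to \(s=2m\) and with the coefficients written in the non-recursive form of Theorem~\ref{thm:nonrecursive}. Since \(m\geq1\), we have \(s=2m\geq2\). Theorem~\ref{thm:main-two-series}(b) therefore applies and gives
\[
 \frac1{\beta(2m)^r}=\sum_{n=0}^{\infty}\frac{c_{2m,r,n}}{3^n},
\]
where \(c_{2m,r,n}\) is the coefficient \(c_n^{(r)}\) of \eqref{eq:C-definition} with \(d_n=p_{2m,n}\). The \(n=0\) term is \(c_0^{(r)}=1\) by Theorem~\ref{thm:recurrence}, which produces the initial \(1\).

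For \(n\geq1\), I would invoke the composition formula \eqref{eq:composition-form}. This writes \(c_{2m,r,n}\) as \(\sum_{j=1}^n\binom{r+j-1}{j}\) times a sum over compositions \(n_1+\cdots+n_j=n\) of the products \(p_{2m,n_1}\cdots p_{2m,n_j}\). Into each factor I substitute the finite expression \eqref{eq:p-finite} with \(s=2m\), that is,
\[
 p_{2m,n_\ell}=\sum_{k=1}^{n_\ell}\frac{(-1)^{k+1}2^k}{(2k+1)^{2m}}\binom{n_\ell-1}{k-1}.
\]
This is exactly the bracket in \eqref{eq:all-even-beta}, so the identity holds term by term. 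The order of summation is unchanged, and the series converges because Theorem~\ref{thm:main-two-series} already guarantees it.

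For the positivity and rationality claims, each term with \(n\geq1\) equals \(c_{2m,r,n}/3^n\). Each bracket is a finite rational sum, so it is rational, and it is strictly positive by Theorem~\ref{thm:p-positive} because \(2m\geq2\). The sum over compositions contains at least one term (take \(j=1\), \(n_1=n\)), every binomial coefficient is a positive integer, and so the whole term is a positive rational.

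I expect no genuine obstacle here, because the analytic content (zero-freeness, evaluation at \(u=1/3\), and positivity of \(p_{s,n}\)) has already been established. The one point to check is that the hypothesis \(s\geq2\) of Theorem~\ref{thm:p-positive} holds. It does for every even \(s=2m\) with \(m\geq1\), and this is what excludes \(s=1\), where Remark~\ref{rem:s-threshold} shows the argument would fail.
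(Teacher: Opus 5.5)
Your proposal is correct and matches the paper's proof essentially step for step: you specialize to \(s=2m\geq2\), take \(d_n=p_{2m,n}\) from \eqref{eq:p-finite}, read off the coefficients of \(A_{2m}(u)^{-r}\) from the composition formula of Theorem~\ref{thm:nonrecursive}, and evaluate at \(u=1/3\) via Theorem~\ref{thm:main-two-series}(b), with positivity from Theorem~\ref{thm:p-positive} and rationality from the finite sums. Your remark that the composition sum is nonempty (take \(j=1\), \(n_1=n\)) is a detail the paper leaves implicit, and it is the right justification for strict positivity.
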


\begin{proof}
Set \(s=2m\) in \eqref{eq:p-finite}.  The resulting base coefficients
are
\[
 p_{2m,n}
 =
 \sum_{k=1}^{n}
 \frac{(-1)^{k+1}2^k}{(2k+1)^{2m}}
 \binom{n-1}{k-1}.
\]
Theorem~\ref{thm:p-positive} gives positivity, and integrality of
\(2m\) gives rationality.  With \(d_n=p_{2m,n}\),
Theorem~\ref{thm:nonrecursive} gives the coefficient of \(u^n\) in
\(A_{2m}(u)^{-r}\).  Since
\(
 A_{2m}(1/3)=F_{2m}(1)=\beta(2m)
\)
by Theorem~\ref{thm:main-two-series}(b), evaluation at \(u=1/3\)
gives \eqref{eq:all-even-beta}.
\end{proof}

\subsection{Catalan's constant}
\label{sec:catalan}

The inverse tangent integral is
\[
 \Ti_2(z)
 =
 \int_0^z\frac{\arctan t}{t}\,\dd t
 =
 \sum_{n=0}^{\infty}
 \frac{(-1)^nz^{2n+1}}{(2n+1)^2},
 \qquad 0\leq z\leq1.
\]
In particular, \(\Ti_2(1)=\Gconst\); see
\cite[Eqs, (1) and (3)]{Ramanujan1915}.  For \(m=1\), comparison of the two
defining power series gives
\(
 F_2(t)=\frac{\Ti_2(\sqrt t)}{\sqrt t},
\)
initially for \(0\leq t<1\).  Its power series removes the apparent
singularity at \(t=0\), where the quotient has value \(1\).  The
accelerated transformation therefore becomes
\begin{equation}\label{eq:transformed-Ti2}
 \frac{\Ti_2\!\left(\sqrt{2u/(1-u)}\right)}
 {\sqrt{2u/(1-u)}}
 =
 1-\sum_{n=1}^{\infty}
 \left(
 \sum_{k=1}^{n}
 \frac{(-1)^{k+1}2^k}{(2k+1)^2}
 \binom{n-1}{k-1}
 \right)u^n.
\end{equation}
The left-hand side is analytic at \(u=0\) and equals \(1\) there.  At
\(u=1/3\), its square-root argument is \(1\), so the value is
\(\Ti_2(1)=\Gconst\).  The Catalan specialization of
Theorem~\ref{thm:all-even-beta} is therefore as follows.

\begin{corollary}[All reciprocal powers of \(\Gconst\)]
\label{cor:all-G}
For every integer \(r\geq1\),
\begin{equation}\label{eq:all-G}
\begin{aligned}
 \frac1{\Gconst^r}
 &=1+
 \sum_{n=1}^{\infty}\frac1{3^n}
 \sum_{j=1}^{n}\binom{r+j-1}{j}\\
 &\qquad {}\times
 \sum_{\substack{n_1+\cdots+n_j=n\\n_i\geq1}}
 \prod_{\ell=1}^{j}
 \left[
 \sum_{k=1}^{n_\ell}
 \frac{(-1)^{k+1}2^k}{(2k+1)^2}
 \binom{n_\ell-1}{k-1}
 \right].
\end{aligned}
\end{equation}
Every term of the series following the initial term \(1\) is a
positive rational number.
\end{corollary}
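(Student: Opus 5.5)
The corollary is the case \(m=1\) of Theorem~\ref{thm:all-even-beta}, so the plan is to specialize that theorem rather than redo the construction. Taking \(m=1\) turns the exponent \(2m\) into \(2\) inside the bracketed inner sums, which then become exactly the base coefficients \(p_{2,n_\ell}\) of \eqref{eq:p-finite}. Since \(\beta(2)=\Gconst\) by \eqref{eq:G-definition}, the left-hand side becomes \(\Gconst^{-r}\), and \eqref{eq:all-G} follows verbatim from \eqref{eq:all-even-beta}.

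For completeness I would also trace the chain of results behind the specialization, checking at each step that \(s=2\) is admissible.

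\begin{enumerate}
\item Theorem~\ref{thm:p-coefficients} gives \(A_2(u)=1-\sum_{n\ge1}p_{2,n}u^n\) for \(|u|<1\). By \eqref{eq:transformed-Ti2} this is the transformed inverse tangent integral.
\item Theorem~\ref{thm:p-positive} requires \(s\geq2\), and \(s=2\) is exactly the boundary case it allows. So \(p_{2,n}>0\) for all \(n\geq1\). This is the one point where the hypothesis is tight; by Remark~\ref{rem:s-threshold} it is genuinely needed.
\item Proposition~\ref{prop:zero-free} shows that \(A_2(u)^{-r}\) is analytic in the unit disk, so its Taylor series converges at \(u=1/3\).
\item The composition formula \eqref{eq:composition-form} of Theorem~\ref{thm:nonrecursive}, with \(d_n=p_{2,n}\), identifies the coefficient of \(u^n\) as the inner double sum in \eqref{eq:all-G}.
\item Evaluating at \(u=1/3\), where \(t=2u/(1-u)=1\), gives \(A_2(1/3)=F_2(1)=\Ti_2(1)=\Gconst\), as in Theorem~\ref{thm:main-two-series}(b).
\end{enumerate}

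For the final claim, each term after the initial \(1\) is \(3^{-n}c_{2,r,n}\). Theorem~\ref{thm:recurrence} makes \(c_{2,r,n}\) positive, or one can see this directly, since \eqref{eq:composition-form} is a sum of products of positive numbers \(p_{2,n_\ell}\) with positive binomial weights. Rationality holds because \(p_{2,n}\) is a finite rational sum.

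I expect no real obstacle. The only substantive input is the positivity of \(p_{2,n}\) at the endpoint \(s=2\), and Theorem~\ref{thm:p-positive} already covers it through Lemma~\ref{lem:log-inequality} with \(m=s-1=1\).
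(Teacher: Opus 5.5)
Your proposal is correct and follows the paper's proof, which takes \(m=1\) in Theorem~\ref{thm:all-even-beta} and uses \(\beta(2)=\Gconst\). The supporting steps you trace (positivity of \(p_{2,n}\) at the endpoint \(s=2\), the zero-free property, the composition formula, and evaluation at \(u=1/3\)) are the same ones that underlie that theorem.
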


\begin{proof}
Take \(m=1\) in Theorem~\ref{thm:all-even-beta} and use
\(\beta(2)=\Gconst\).
\end{proof}

Table~\ref{tab:p2} lists the first twelve coefficients obtained from
\eqref{eq:p-finite}.  Although the finite sum alternates, positivity is
guaranteed by Theorem~\ref{thm:p-positive}.

\begin{table}[ht]
\centering
\caption{The first accelerated Catalan coefficients
\(p_{2,n}\).}\label{tab:p2}
\small
\begin{tabular}{c l@{\qquad}c l}
\toprule
\(n\)&\(p_{2,n}\)&\(n\)&\(p_{2,n}\)\\
\midrule
1&\(2/9\)
&7&\(49156258/2029052025\)\\
2&\(14/225\)
&8&\(1437349042/83770862175\)\\
3&\(722/11025\)
&9&\(540857698558/30241281245175\)\\
4&\(3422/99225\)
&10&\(19381997114/1440061011675\)\\
5&\(434066/12006225\)
&11&\(14930206511746/1066509185246505\)\\
6&\(9407686/405810405\)
&12&\(133043356333714/12119422559619375\)\\
\bottomrule
\end{tabular}
\end{table}

Starting from these base coefficients, the recurrence
\eqref{eq:recurrence} produces the values in Table~\ref{tab:c-first}.
The entries are \(c_{2,r,n}\) from \eqref{eq:c-definition}; the actual
series term is \(c_{2,r,n}/3^n\).

\begin{table}[ht]
\centering
\caption{Initial reciprocal coefficients \(c_{2,r,n}\) for
\(r=1,2,3,4\).}\label{tab:c-first}
\small
\setlength{\tabcolsep}{3pt}
\renewcommand{\arraystretch}{1.25}
\begin{tabular}{c c c c c}
\toprule
\(n\)&\(r=1\)&\(r=2\)&\(r=3\)&\(r=4\)\\
\midrule
0&\(1\)&\(1\)&\(1\)&\(1\)\\
1&\(2/9\)&\(4/9\)&\(2/3\)&\(8/9\)\\
2&\(226/2025\)&\(184/675\)&\(326/675\)&\(1504/2025\)\\
3&\(92978/893025\)&\(230252/893025\)
 &\(421622/893025\)&\(676888/893025\)\\
4&\(15897874/200930625\)&\(43596272/200930625\)
 &\(4115114/9568125\)&\(29634688/40186125\)\\
\bottomrule
\end{tabular}
\end{table}

Thus, each row contributes its entry divided by \(3^n\).  For
\(r=1\) and \(r=2\), one further use of the recurrence gives the first
five nonconstant terms:
\begin{align*}
 \frac1{\Gconst}
 &=
 1+\sum_{n=1}^{\infty}\frac1{3^n}
 \sum_{j=1}^{n}
 \sum_{\substack{n_1+\cdots+n_j=n\\n_i\geq1}}
 \prod_{\ell=1}^{j}
 \left[
 \sum_{k=1}^{n_\ell}
 \frac{(-1)^{k+1}2^k}{(2k+1)^2}
 \binom{n_\ell-1}{k-1}
 \right]\\
 &=
 1+\frac2{27}
 +\frac{226}{18225}
 +\frac{92978}{24111675}
 +\frac{15897874}{16275380625}
 +\frac{16451880818}{53171668501875}
 +\cdots
 \approx 1.0917440637\ldots,\\
 \frac1{\Gconst^2}
 &=
 1+\sum_{n=1}^{\infty}\frac1{3^n}
 \sum_{j=1}^{n}(j+1)
 \sum_{\substack{n_1+\cdots+n_j=n\\n_i\geq1}}
 \prod_{\ell=1}^{j}
 \left[
 \sum_{k=1}^{n_\ell}
 \frac{(-1)^{k+1}2^k}{(2k+1)^2}
 \binom{n_\ell-1}{k-1}
 \right]\\
 &=
 1+\frac4{27}
 +\frac{184}{6075}
 +\frac{230252}{24111675}
 +\frac{43596272}{16275380625}
 +\frac{2175404068}{2531984214375}
 +\cdots
 \approx 1.19190510063\ldots.
\end{align*}
All terms in these expansions are positive, so every partial sum is a
strict lower bound.  The remainder is controlled by
Theorem~\ref{thm:error-bounds}, and Proposition~\ref{prop:root-rate}
gives the geometric rate.
\section{\texorpdfstring{Odd beta values and reciprocal powers of \(\pi\)}
{Odd beta values and reciprocal powers of pi}}
\label{sec:odd-beta}

Odd beta values are rational multiples of odd powers of \(\pi\).
Combining these classical evaluations with the preceding construction
therefore yields positive rational series for reciprocal powers of
\(\pi\).

Setting \(x=1/2\) in the Euler-polynomial generating function, using
the convention in
\cite[Eqs.~(23.1.1)--(23.1.3), p.~804]{AbramowitzStegun1964},
and replacing the generating variable by \(2z\), we obtain the
Euler-number convention
\begin{equation}\label{eq:euler-number-convention}
 \operatorname{sech}z
 =
 \frac{2}{\e^z+\e^{-z}}
 =
 \sum_{n=0}^{\infty}E_n\frac{z^n}{n!},
 \qquad |z|<\frac{\pi}{2}.
\end{equation}
With this convention,
\[
 E_0=1,\quad E_2=-1,\quad E_4=5,\quad E_6=-61,\quad
 E_8=1385,
\]
while \(E_{2m+1}=0\) for \(m\geq0\).  They also satisfy the recurrence
\begin{equation}\label{eq:euler-number-recurrence}
 \sum_{j=0}^{m}\binom{2m}{2j}E_{2j}=0,
 \qquad m\geq1,
\end{equation}
obtained by multiplying the power series for
\(\operatorname{sech}z\) and \(\cosh z\).  The even-indexed terms
alternate in sign:
\[
 (-1)^mE_{2m}=|E_{2m}|.
\]
See \cite[Eq.~(23.1.15), p.~805]{AbramowitzStegun1964}.
The required beta values follow from the Fourier expansion of the Euler
polynomials:
\[
 E_{2m}(x)
 =
 (-1)^m\frac{4(2m)!}{\pi^{2m+1}}
 \sum_{k=0}^{\infty}
 \frac{\sin((2k+1)\pi x)}{(2k+1)^{2m+1}},
 \qquad m\geq1,\quad 0\leq x\leq1.
\]
Together with
\(E_n(1/2)=2^{-n}E_n\), this expansion is recorded in
\cite[Eqs.~(23.1.18) and (23.1.21), p.~805]{AbramowitzStegun1964}.  Setting
\(x=1/2\) and using
\[
 \sin\left(\frac{(2k+1)\pi}{2}\right)=(-1)^k
\]
yields the classical formula
\begin{equation}\label{eq:odd-beta}
 \beta(2m+1)
 =
 \frac{(-1)^mE_{2m}}{4^{m+1}(2m)!}\,
 \pi^{2m+1},
 \qquad m\geq1.
\end{equation}
The same identity is also recorded directly in
\cite[Eq.~(23.2.22), p.~807]{AbramowitzStegun1964}.  The remaining case
\(m=0\) is
\(
 \beta(1)=\pi/4.
\)
The first few odd beta values are
\begin{equation}\label{eq:first-odd-beta-values}
 \beta(3)=\frac{\pi^3}{32},\qquad
 \beta(5)=\frac{5\pi^5}{1536},\qquad
 \beta(7)=\frac{61\pi^7}{184320},\qquad
 \beta(9)=\frac{1385\pi^9}{41287680}.
\end{equation}

Substitution into the general reciprocal series gives the following
explicit family.

\begin{theorem}[Reciprocal powers of odd beta values]
\label{thm:odd-beta-powers}
For all integers \(m,r\geq1\),
\begin{equation}\label{eq:pi-family}
\begin{aligned}
 \frac{\bigl[4^{m+1}(2m)!\bigr]^r}
 {|E_{2m}|^r\pi^{(2m+1)r}}
 &=
 1+\sum_{n=1}^{\infty}\frac1{3^n}
 \sum_{j=1}^{n}\binom{r+j-1}{j}\\
 &\qquad {}\times
 \sum_{\substack{n_1+\cdots+n_j=n\\n_i\geq1}}
 \prod_{\ell=1}^{j}
 \left[
 \sum_{k=1}^{n_\ell}
 \frac{(-1)^{k+1}2^k}{(2k+1)^{2m+1}}
 \binom{n_\ell-1}{k-1}
 \right].
\end{aligned}
\end{equation}
For every \(n\geq1\), the coefficient of \(3^{-n}\) is a positive
rational number.
\end{theorem}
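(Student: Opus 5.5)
The plan is to mirror the proof of Theorem~\ref{thm:all-even-beta}, with \(s=2m+1\) in place of \(s=2m\). Fix integers \(m,r\geq1\). Then \(s=2m+1\geq3\), so the accelerated transform \(A_{2m+1}(u)\) is available with positive base coefficients.

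First, I specialize \eqref{eq:p-finite} to
\[
 p_{2m+1,n}=\sum_{k=1}^{n}\frac{(-1)^{k+1}2^k}{(2k+1)^{2m+1}}\binom{n-1}{k-1}.
\]
Each \(p_{2m+1,n}\) is a finite sum of rationals, and Theorem~\ref{thm:p-positive} (which needs \(s\geq2\), satisfied here) gives \(p_{2m+1,n}>0\). Setting \(d_n=p_{2m+1,n}\), Theorem~\ref{thm:nonrecursive} identifies the coefficient \(c_{2m+1,r,n}\) of \(u^n\) in \(A_{2m+1}(u)^{-r}\) with the composition sum on the right of \eqref{eq:pi-family}, including the initial term \(1\) for \(n=0\). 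Theorem~\ref{thm:recurrence} (or directly the composition form, a sum of products of positive rationals with positive integer weights \(\binom{r+j-1}{j}\)) shows each such coefficient is a positive rational.

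Second, Theorem~\ref{thm:main-two-series}(b) with \(s=2m+1\) gives
\[
 \beta(2m+1)^{-r}=\sum_{n\ge0}c_{2m+1,r,n}3^{-n}.
\]
It remains to identify the left side. By \eqref{eq:odd-beta} and \((-1)^mE_{2m}=|E_{2m}|\), we have
\[
 \beta(2m+1)=\frac{|E_{2m}|\,\pi^{2m+1}}{4^{m+1}(2m)!}.
\]
Since this is positive and \(E_{2m}\neq0\), we may raise its reciprocal to the \(r\)-th power, which yields the left side of \eqref{eq:pi-family}.

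There is no real obstacle. The only points that need care are checking that the hypothesis \(s\geq2\) of Theorem~\ref{thm:p-positive} holds, which it does since \(m\geq1\), and confirming the sign convention \((-1)^mE_{2m}=|E_{2m}|\), so that no stray sign enters the left side. The case \(m=0\), giving \(\beta(1)=\pi/4\), is excluded here because \(s=1\) violates the positivity threshold (Remark~\ref{rem:s-threshold}). That case is instead handled by the base-\(2\) series.
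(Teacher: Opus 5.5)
Your proposal is correct and follows the paper's proof essentially step for step. You specialize \eqref{eq:p-finite} at \(s=2m+1\geq3\), get positivity from Theorem~\ref{thm:p-positive}, identify the coefficients via Theorem~\ref{thm:nonrecursive}, evaluate at \(u=1/3\) with Theorem~\ref{thm:main-two-series}(b), and rewrite \(\beta(2m+1)^{-r}\) using \eqref{eq:odd-beta} together with \((-1)^mE_{2m}=|E_{2m}|\).
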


\begin{proof}
Set \(s=2m+1\) in \eqref{eq:p-finite}.  The corresponding base
coefficients are
\[
 p_{2m+1,n}
 =
 \sum_{k=1}^{n}
 \frac{(-1)^{k+1}2^k}{(2k+1)^{2m+1}}
 \binom{n-1}{k-1}.
\]
Because \(2m+1\geq3\), Theorem~\ref{thm:p-positive} shows that these
coefficients are positive.  They are rational because \(2m+1\) is an
integer.

With \(d_n=p_{2m+1,n}\), Theorem~\ref{thm:nonrecursive} gives the
coefficient of \(u^n\) in \(A_{2m+1}(u)^{-r}\), while
Theorem~\ref{thm:main-two-series}(b) gives
\[
 A_{2m+1}(1/3)^{-r}
 =
 \frac1{\beta(2m+1)^r}.
\]
Equation \eqref{eq:odd-beta}, together with
\(
 (-1)^mE_{2m}=|E_{2m}|
\), gives
\[
 \frac1{\beta(2m+1)^r}
 =
 \frac{\bigl[4^{m+1}(2m)!\bigr]^r}
 {|E_{2m}|^r\pi^{(2m+1)r}}.
\]
Combining the two evaluations gives \eqref{eq:pi-family}.  Positivity
and rationality follow from the corresponding properties of the base
coefficients and the finite composition formula.
\end{proof}

Using the notation \(c_{s,r,n}\) from \eqref{eq:c-definition}, the
first four cases take a compact form.

\begin{corollary}\label{cor:first-four-odd-beta}
For every integer \(r\geq1\),
\begin{align}
 \frac{32^r}{\pi^{3r}}
 &=\sum_{n=0}^{\infty}\frac{c_{3,r,n}}{3^n},
 \label{eq:pi3}\\
 \frac{(1536/5)^r}{\pi^{5r}}
 &=\sum_{n=0}^{\infty}\frac{c_{5,r,n}}{3^n},
 \label{eq:pi5}\\
 \frac{(184320/61)^r}{\pi^{7r}}
 &=\sum_{n=0}^{\infty}\frac{c_{7,r,n}}{3^n},
 \label{eq:pi7}\\
 \frac{(41287680/1385)^r}{\pi^{9r}}
 &=\sum_{n=0}^{\infty}\frac{c_{9,r,n}}{3^n}.
 \label{eq:pi9}
\end{align}
Every coefficient on the right-hand sides is a positive rational
number.
\end{corollary}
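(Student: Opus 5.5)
This corollary should follow directly from Theorem~\ref{thm:main-two-series}(b) and the closed forms \eqref{eq:first-odd-beta-values}. Take \(s=2m+1\) with \(m=1,2,3,4\), so \(s=3,5,7,9\). In each case \(s\geq2\), so part~(b) of Theorem~\ref{thm:main-two-series} applies. It gives
\[
 \frac1{\beta(s)^r}=\sum_{n=0}^{\infty}\frac{c_{s,r,n}}{3^n},
\]
where \(c_{s,r,n}\) is generated by the recurrence \eqref{eq:recurrence} with \(d_n=p_{s,n}\). Equivalently, \(c_{s,r,n}\) is given by the composition formula of Theorem~\ref{thm:nonrecursive}. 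This is exactly the specialization of Theorem~\ref{thm:odd-beta-powers}, now written in the compact notation \eqref{eq:c-definition}.

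Next I would identify the left-hand sides. From \eqref{eq:first-odd-beta-values},
\[
 \beta(3)=\frac{\pi^3}{32},\quad \beta(5)=\frac{5\pi^5}{1536},\quad \beta(7)=\frac{61\pi^7}{184320},\quad \beta(9)=\frac{1385\pi^9}{41287680}.
\]
Raising each to the power \(-r\) gives the four stated left-hand sides \eqref{eq:pi3}--\eqref{eq:pi9}. I would also cross-check these constants against the general formula \eqref{eq:odd-beta}, computing \(4^{m+1}(2m)!/|E_{2m}|\) with \(E_2=-1\), \(E_4=5\), \(E_6=-61\), \(E_8=1385\). For \(m=1\) this gives \(16\cdot2/1=32\). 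For \(m=2\) it gives \(64\cdot24/5=1536/5\). For \(m=3\) it gives \(256\cdot720/61=184320/61\). For \(m=4\) it gives \(1024\cdot40320/1385=41287680/1385\).

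The remaining claim is that every coefficient is a positive rational. Positivity of \(p_{s,n}\) for these \(s\) comes from Theorem~\ref{thm:p-positive}, and positivity of \(c_{s,r,n}\) then follows from Theorem~\ref{thm:recurrence}. Rationality follows because \(p_{s,n}\) is given by the finite rational sum \eqref{eq:p-finite} when \(s\) is an integer, and the recurrence preserves rationality. This covers every \(n\geq0\), including \(c_{s,r,0}=1\).

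There is no real obstacle here. The only step needing care is the arithmetic check of the four constants against \eqref{eq:odd-beta}, in particular getting the sign convention for \(E_{2m}\) right by using \((-1)^mE_{2m}=|E_{2m}|\).
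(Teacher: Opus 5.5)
Your proposal is correct and follows essentially the paper's route. The paper specializes Theorem~\ref{thm:odd-beta-powers} to \(m=1,2,3,4\) and reads the constants off \eqref{eq:first-odd-beta-values}, whereas you invoke Theorem~\ref{thm:main-two-series}(b), on which that theorem rests, directly; your cross-check of the four constants against \eqref{eq:odd-beta} is accurate and a harmless extra.
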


\begin{proof}
Take \(m=1,2,3,4\), respectively, in
Theorem~\ref{thm:odd-beta-powers} and use the values in
\eqref{eq:first-odd-beta-values}.
\end{proof}

After a positive rational rescaling, the theorem therefore gives a
positive rational series for
\[
 \pi^{-(2m+1)r}
\]
for every \(m\geq1\) and \(r\geq1\).
For computation, the finite formula \eqref{eq:p-finite} and recurrence
\eqref{eq:recurrence} are more convenient than the composition sum.
They evaluate all the series in \eqref{eq:pi-family}, including
\eqref{eq:pi3}--\eqref{eq:pi9}, using rational arithmetic alone.

The case \(m=0\), namely \(\beta(1)=\pi/4\), must be treated with
the ordinary Euler transformation because accelerated positivity fails
at \(s=1\).

\begin{corollary}[The case \(\beta(1)=\pi/4\)]
\label{cor:pi1-ordinary}
For every integer \(r\geq1\),
\begin{equation}\label{eq:pi1-ordinary}
\begin{aligned}
 \frac{4^r}{\pi^r}
 &=
 1+\sum_{n=1}^{\infty}\frac1{2^n}
 \sum_{j=1}^{n}\binom{r+j-1}{j}\\
 &\qquad {}\times
 \sum_{\substack{n_1+\cdots+n_j=n\\n_i\geq1}}
 \prod_{\ell=1}^{j}
 \left[
 \sum_{k=1}^{n_\ell}
 \frac{(-1)^{k+1}}{2k+1}
 \binom{n_\ell-1}{k-1}
 \right].
\end{aligned}
\end{equation}
For every \(n\geq1\), the coefficient of \(2^{-n}\) is a positive
rational number.
\end{corollary}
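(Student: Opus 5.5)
The plan is to specialize the ordinary Euler construction to \(s=1\) and combine it with the composition formula. Set \(s=1\) in \eqref{eq:q-finite}. Since \((2k+1)^1=2k+1\), the base coefficients become
\[
 q_{1,n}=\sum_{k=1}^{n}\frac{(-1)^{k+1}}{2k+1}\binom{n-1}{k-1},
\]
which are exactly the bracketed inner sums in \eqref{eq:pi1-ordinary}. Theorem~\ref{thm:H-coefficients} applies for every real \(s>0\), so in particular at \(s=1\); its integral formula \eqref{eq:q-integral} gives \(q_{1,n}>0\). Rationality is immediate from the finite sum.

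Next, take \(d_n=q_{1,n}\) in Theorem~\ref{thm:nonrecursive}. The composition form \eqref{eq:composition-form} then identifies the coefficient \(b_{1,r,n}\) of \(w^n\) in \(H_1(w)^{-r}\) with the double sum over \(j\) and compositions \(n_1+\cdots+n_j=n\) appearing in \eqref{eq:pi1-ordinary}, and \(b_{1,r,0}=1\). Theorem~\ref{thm:main-two-series}(a) with \(s=1\) gives
\[
 \frac1{\beta(1)^r}=\sum_{n\geq0}\frac{b_{1,r,n}}{2^n}.
\]
Finally, \(\beta(1)=\pi/4\) by Leibniz's series, or by \(F_1(1)=\int_0^1\dd x/(1+x^2)=\pi/4\) from \eqref{eq:F-integral}. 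Hence \(\beta(1)^{-r}=4^r/\pi^r\), which gives \eqref{eq:pi1-ordinary}.

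For each \(n\geq1\), the coefficient of \(2^{-n}\) is a finite sum of positive integers \(\binom{r+j-1}{j}\) times products of the positive rationals \(q_{1,n_\ell}\), so it is a positive rational number. Alternatively, this follows from Theorem~\ref{thm:recurrence}.

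There is essentially no obstacle. The only point needing care is that the ordinary transform, unlike the accelerated one (compare Remark~\ref{rem:s-threshold}), has positive coefficients already at \(s=1\); this is supplied by the integral representation \eqref{eq:q-integral}. The convergence of the evaluation at \(w=1/2\) is already covered by Proposition~\ref{prop:zero-free} for all \(s>0\).
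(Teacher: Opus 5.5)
Your proposal is correct and follows the paper's own proof: specialize \eqref{eq:q-finite} at \(s=1\), insert \(d_n=q_{1,n}\) into the composition formula of Theorem~\ref{thm:nonrecursive}, and evaluate at \(w=1/2\) via Theorem~\ref{thm:main-two-series}(a) with \(\beta(1)=\pi/4\). Positivity comes from the integral formula \eqref{eq:q-integral}, and rationality from the finite sum, exactly as in the paper.
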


\begin{proof}
Set \(s=1\) in \eqref{eq:q-finite} and substitute the resulting
coefficients \(d_n=q_{1,n}\) into
Theorem~\ref{thm:nonrecursive}.  Theorem~\ref{thm:main-two-series}(a)
then gives
\[
 H_1(1/2)^{-r}
 =
 \frac1{\beta(1)^r}
 =
 \frac{4^r}{\pi^r}.
\]
Positivity follows from Theorem~\ref{thm:H-coefficients} together with
Theorem~\ref{thm:nonrecursive}, and the finite coefficient formula
gives rationality.
\end{proof}
\section{Numerical comparison and verification}\label{sec:numerics}

For the Catalan case, write
\begin{equation}\label{eq:S-rN}
 S_{r,N}=\sum_{n=0}^{N}\frac{c_{2,r,n}}{3^n}.
\end{equation}
Table~\ref{tab:errors} reports the resulting absolute errors.  The
coefficients were kept exact, and \(\Gconst\) was evaluated
independently at high precision.

\begin{table}[ht]
\centering
\caption{Absolute error
\(\left|S_{r,N}-\Gconst^{-r}\right|\).}\label{tab:errors}
\small
\begin{tabular}{r c c c c}
\toprule
\(N\)&\(r=1\)&\(r=2\)&\(r=3\)&\(r=4\)\\
\midrule
5  &\(1.271{\times}10^{-4}\)&\(3.817{\times}10^{-4}\)
   &\(8.277{\times}10^{-4}\)&\(1.550{\times}10^{-3}\)\\
10 &\(3.796{\times}10^{-7}\)&\(1.296{\times}10^{-6}\)
   &\(3.184{\times}10^{-6}\)&\(6.724{\times}10^{-6}\)\\
20 &\(4.401{\times}10^{-12}\)&\(1.777{\times}10^{-11}\)
   &\(5.109{\times}10^{-11}\)&\(1.252{\times}10^{-10}\)\\
40 &\(8.494{\times}10^{-22}\)&\(4.145{\times}10^{-21}\)
   &\(1.425{\times}10^{-20}\)&\(4.147{\times}10^{-20}\)\\
80 &\(4.661{\times}10^{-41}\)&\(2.798{\times}10^{-40}\)
   &\(1.172{\times}10^{-39}\)&\(4.127{\times}10^{-39}\)\\
\bottomrule
\end{tabular}
\end{table}

For \(r=1\), Table~\ref{tab:compare} compares the ordinary
\(2^{-n}\)-series with the accelerated \(3^{-n}\)-series.  Proposition
\ref{prop:root-rate} identifies their exponential scales as \(1/2\)
and \(1/3\), but does not determine the constant in a term-by-term
comparison.  The numerical data suggest the following sharper
asymptotic statement.

\begin{conjecture}\label{conj:term-ratio}
For fixed integers \(s\geq2\) and \(r\geq1\),
\begin{equation}\label{eq:conjectured-term-ratio}
 \frac{c_{s,r,n}/3^n}{b_{s,r,n}/2^n}
 \sim
 2^{r/2}\left(\frac23\right)^n
 \qquad(n\to\infty).
\end{equation}
\end{conjecture}

The table confirms the faster convergence for \(s=2\) and \(r=1\),
although it is not designed to determine the constant in
Conjecture~\ref{conj:term-ratio}.

\begin{table}[ht]
\centering
\caption{Error comparison for \(1/\Gconst\).}\label{tab:compare}
\small
\begin{tabular}{r c c}
\toprule
\(N\)&ordinary Euler series&accelerated series\\
\midrule
5  &\(1.387{\times}10^{-3}\)&\(1.271{\times}10^{-4}\)\\
10 &\(3.168{\times}10^{-5}\)&\(3.796{\times}10^{-7}\)\\
20 &\(2.166{\times}10^{-8}\)&\(4.401{\times}10^{-12}\)\\
40 &\(1.407{\times}10^{-14}\)&\(8.494{\times}10^{-22}\)\\
80 &\(8.582{\times}10^{-27}\)&\(4.661{\times}10^{-41}\)\\
\bottomrule
\end{tabular}
\end{table}

The computation proceeds entirely with rational coefficients:
\(p_{2,n}\) comes from \eqref{eq:p-finite}, the recurrence
\eqref{eq:recurrence} produces \(c_{2,r,n}\), and
\eqref{eq:S-rN} gives the partial sums.  Up to degree \(N\), this uses
\(O(N^2)\) rational operations and stores \(O(N)\) coefficients.  As
an independent check on the final decimal errors, we used Ramanujan's
rapidly convergent identity
\begin{align}\label{eq:independent-G}
 \Gconst
 =
 \frac{\pi}{8}\log(2+\sqrt3)
 +\frac38\sum_{n=0}^{\infty}
 \frac1{(2n+1)^2\binom{2n}{n}}.
\end{align}
This identity follows from
\cite[Eqs.~(7)--(8)]{Ramanujan1915}; see also
Bradley \cite[Eq.~(2)]{Bradley1999}.
\appendix

\section{Proof of the standard Dirichlet-series identity}
\label{app:standard-L-proof}

\begin{proof}[Proof of Theorem~\ref{thm:standard-L}]
For \(s>1\), the identity \(\beta(s)=L(s,\chi_4)\) gives the Euler
product
\begin{equation}\label{eq:beta-euler-product}
 \beta(s)
 =\prod_p
  \left(1-\frac{\chi_4(p)}{p^s}\right)^{-1}.
\end{equation}
Absolute convergence for \(s>1\) follows from
\cite[Thm.~11.7]{Apostol1976}, so the product may be inverted and
raised to the positive integer power \(r\):
\[
 \frac1{\beta(s)^r}
 =
 \prod_p
 \left(1-\frac{\chi_4(p)}{p^s}\right)^r
 =
 \prod_p\sum_{a=0}^{r}
 (-1)^a\binom{r}{a}
 \frac{\chi_4(p)^a}{p^{as}}.
\]
The binomial theorem shows that the coefficient attached to \(p^a\)
is
\[
 (-1)^a\binom{r}{a}\chi_4(p)^a
 \qquad(0\leq a\leq r),
\]
and it is zero for \(a>r\).

The function \(\mu_r\) is multiplicative because Dirichlet convolution
preserves multiplicativity \cite[Thm.~2.14]{Apostol1976}.  At a fixed
prime \(p\), its generating function is especially simple.  Since
\(
 \mu(1)=1,\,
 \mu(p)=-1,\,
 \mu(p^a)=0\, (a\geq2),
\)
we have
\[
 \sum_{a=0}^{\infty}\mu_r(p^a)z^a
 =
 \left(
  \sum_{a=0}^{\infty}\mu(p^a)z^a
 \right)^r
 =
 (1-z)^r
 =
 \sum_{a=0}^{r}(-1)^a\binom{r}{a}z^a.
\]
Comparing coefficients gives \eqref{eq:mu-r-prime}.

For \(n=\prod_p p^{\nu_p(n)}\), multiplicativity of \(\chi_4\) and
\(\mu_r\) shows that the coefficient obtained from the Euler factors
is
\[
 \prod_p
 \chi_4(p)^{\nu_p(n)}
 \mu_r\!\left(p^{\nu_p(n)}\right)
 =
 \chi_4(n)\mu_r(n).
\]
To justify expansion of the product, note from \eqref{eq:mu-r} that
\[
 \mu_r(n)
 =
 \sum_{n_1n_2\cdots n_r=n}
 \mu(n_1)\mu(n_2)\cdots\mu(n_r).
\]
Because \(|\mu(n_j)|\leq1\),
\[
 |\mu_r(n)|
 \leq
 \sum_{n_1n_2\cdots n_r=n}1,
\]
and \(|\chi_4(n)|\leq1\).  Hence
\[
 \sum_{n=1}^{\infty}
 \frac{|\chi_4(n)\mu_r(n)|}{n^s}
 \leq
 \sum_{n_1,\ldots,n_r\geq1}
 \frac1{(n_1\cdots n_r)^s}
 =\zeta(s)^r<\infty,
\]
where \(\zeta(s)\) is the Riemann zeta function.
The product therefore expands as an absolutely convergent Dirichlet
series, and the coefficient calculation above gives
\eqref{eq:standard-L-series}.
\end{proof}

\section{An alternative recurrence for the accelerated Catalan base
coefficients}
\label{app:ode}

Let \(\theta_t=t\,\frac{\dd}{\dd t}\).  From the defining series,
\begin{equation}\label{eq:F2-ode}
 (2\theta_t+1)^2F_2(t)=\frac1{1+t}.
\end{equation}
Under \(t=2u/(1-u)\),
\(
 \theta_t=u(1-u)\frac{\dd}{\dd u}.
\)
If
\[
 A_2(u)=\sum_{n=0}^{\infty}a_nu^n,
 \qquad a_0=1,\qquad a_n=-p_{2,n}\ (n\geq1),
\]
then comparing coefficients in \eqref{eq:F2-ode} gives
\begin{equation}\label{eq:a-ode-recurrence}
 (2n+1)^2a_n
 -8n(n-1)a_{n-1}
 +4(n-1)(n-2)a_{n-2}
 =2(-1)^n,
 \qquad n\geq1,
\end{equation}
with the last term omitted when \(n=1\) (equivalently,
\(a_{-1}=0\)).
This provides an independent exact check on
\eqref{eq:p-finite}.

\section*{Declarations}

\subsection*{Funding}

The author declares that no funds, grants, or other financial support
were received during the preparation of this manuscript.

\end{document}